\def\norm#1{\mathopen\| #1 \mathclose\|}
\newcommand{\poly}{\mathop{\mbox{\rm poly}}}
\newcommand{\ignore}[1]{}
\def\reals{{\mathbb R}}
\newcommand\Zcal{\mathcal{Z}}
\def\bold0{\mathbf{0}}
\def\bB{\mathbf{B}}
\def\bA{\mathbf{A}}
\def\bI{\mathbf{I}}
\def\bB{{B}}
\def\bA{\mathbf{A}}
\def\bB{\mathbf{B}}
\def\bD{\mathbf{D}}
\def\bI{\mathbf{I}}
\def\bM{\mathbf{M}}
\def\eps{\varepsilon}
\def\epsilon{\varepsilon}
\newcommand{\defeq}{\stackrel{\text{def}}{=}}
\newcommand{\braces}[1]{\left\{#1\right\}}
\newcommand{\pa}[1]{\left(#1\right)}
\newcommand{\bra}[1]{\left[#1\right]}
\newcommand{\abs}[1]{\left|#1\right|}
\DeclareMathOperator{\argmin}{argmin}
\newtheorem{theorem}{Theorem}[section]
\newtheorem{corollary}[theorem]{Corollary}
\newtheorem{lemma}[theorem]{Lemma}
\newtheorem{fact}[theorem]{Fact}
\newtheorem{remark}[theorem]{Remark}
\theoremstyle{definition}
\DeclareMathOperator{\diag}{diag}
\newcommand\inner[1]{\langle #1 \rangle}
\newcommand\grad{\nabla}
\newcommand\hess{\nabla^2}
\newcommand\third{\nabla^3}
\newcommand\fourth{\nabla^4}
\newcommand\nab[1]{\nabla^#1}
\def\scalar{c}
\def\tA{\tilde{\mathbf{A}}}
\def\tQ{\tilde{\mathbf{Q}}}
\def\tb{\tilde{b}}
\def\tq{\tilde{q}}
\def\apm{\mu}
\def\linfty{\ell_\infty}
\DeclareMathOperator{\smax}{smax}
\DeclareMathOperator{\shinge}{shinge}
\DeclareMathOperator{\sabs}{sabs}
\DeclareMathOperator{\softlone}{soft-\ell{1}}
\DeclareMathOperator{\ssvm}{softSVM}
\DeclareMathOperator{\svm}{SVM}
\newcommand{\fastq}{\mathsf{FastQuartic}}
\title{Higher-Order Accelerated Methods for Faster Non-Smooth Optimization}
\author{
Brian Bullins$^*$
\and
Richard Peng$^\dagger$ 
}
\begin{document}

    \maketitle
\begin{abstract}
We provide improved convergence rates for various \emph{non-smooth} optimization problems via higher-order accelerated methods. In the case of $\ell_\infty$ regression, we achieves an $O(\eps^{-4/5})$ iteration complexity, breaking the $O(\eps^{-1})$ barrier so far present for previous methods. We arrive at a similar rate for the problem of $\ell_1$-SVM, going beyond what is attainable by first-order methods with prox-oracle access for non-smooth non-strongly convex problems. We further show how to achieve even faster rates by introducing higher-order regularization.

Our results rely on recent advances in near-optimal accelerated methods for higher-order smooth convex optimization. In particular, we extend Nesterov's smoothing technique to show that the standard softmax approximation is not only smooth in the usual sense, but also \emph{higher-order} smooth. With this observation in hand, we provide the first example of higher-order acceleration techniques yielding faster rates for \emph{non-smooth} optimization, to the best of our knowledge.
\end{abstract}

\footnotetext[1]{Princeton University and Google AI Princeton. \texttt{bbullins@cs.princeton.edu}.}
\footnotetext[2]{Georgia Tech and Microsoft Research Redmond. \texttt{rpeng@cc.gatech.edu}.}

\section{Introduction}
The benefit of smoothness for obtaining faster convergence has been well established in the optimization literature. Sadly, many machine learning tasks are inherently non-smooth, and thus do not inherit these favorable guarantees. In the non-smooth setting, it is known that one can achieve better than the black-box $O(1/\eps^2)$ rate for certain structured functions \citep{nesterov2005smooth}, including several (such as hinge loss, $\ell_1$ regression, etc.) that play a pivotal role in modern machine learning. 

In this paper, we are interested in developing faster methods for these important non-smooth optimization problems, one such example being the classic problem of $\linfty$ regression. As noted in \cite{ene2019improved}, even achieving a linear dependence in $\eps^{-1}$ has required careful handling of accelerated techniques for non-smooth optimization \citep{nesterov2005smooth, sherman2017area, sidford2018coordinate}. In this work, we show how to go \emph{beyond} these rates to achieve an iteration complexity that is \emph{sublinear} in $\eps^{-1}$. We further extend these results to the setting of soft-margin SVM, under various choices of regularization, again achieving iteration complexities that are sublinear in $\eps^{-1}$. Additionally, by making use of efficient tensor methods \cite{nesterov2018implementable, bullins2018fast}, we establish overall computational complexity in terms of (per-iteration) linear system solves, thus providing results that may be compared with \cite{christiano2011electrical, chin2013runtime, ene2019improved}.

The key observation of this work is that the softmax approximation to the max function, which we denote as $\smax_\apm(\cdot)$ (parameterized by $\apm > 0$), is not only smooth (i.e., its gradient is Lipschitz), but also \emph{higher-order smooth}. In particular, we establish Lipschitz continuity of its \emph{third} derivative by ensuring a bound on its fourth derivative, with Lipschitz constant $O(1/\apm^3)$. By combining this observation with recent advances in higher-order acceleration \citep{gasnikovdec2018global, jiang2018optimal, bubeck2018near, bullins2018fast}, we achieve an improved iteration complexity of $O(1/\eps^{4/5})$, thus going beyond the previous $O(1/\eps)$ dependence \citep{nesterov2005smooth, sherman2017area, sidford2018coordinate, ene2019improved}.

After bringing together the higher-order smoothness of softmax with near-optimal higher-order acceleration techniques, we arrive at the following results, beginning with $\linfty$ regression.
\begin{theorem}\label{thm:mainlinfty}
Let $f(x) = \norm{\bA x-b}_\infty$ for $b \in \reals^m$, $\bA \in \reals^{m \times d}$ s.t. $\bA^\top \bA \succ 0$, and let $x^* \defeq \argmin\limits_{x\in\reals^d}f(x)$. There is a method, initialized with $x_0$, that outputs $x_N$ such that
\begin{equation*}
f(x_N) - f(x^*) \leq \eps
\end{equation*}
in $O\pa{\frac{\log^{3/5}(m)\norm{x_0-x^*}_{\bA^\top \bA}^{4/5}}{\eps^{4/5}}}$ iterations, where each iteration requires $O(\log^{O(1)}(\Zcal/\eps))$ calls to a gradient oracle and solutions to linear systems of the form $\bA^\top \bD_x\bA\phi = w_x$, for diagonal matrix $\bD_x\in \reals^{m\times m}$, $w_x \in \reals^m$, and for some problem-dependent parameter $\Zcal$.
\end{theorem}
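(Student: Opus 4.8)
The plan is to replace $f$ by the softmax surrogate, run a near-optimal \emph{third}-order accelerated method on the surrogate in the geometry of $\bA^\top\bA$, and trade the smoothing parameter $\apm$ against the accuracy $\eps$. First I would rewrite $\norm{y}_\infty = \max_{j\in[2m]}\tilde y_j$ for $\tilde y \defeq (y,-y)\in\reals^{2m}$, so that $f(x) = \max_{j\in[2m]}(\tA x - \tb)_j$ with $\tA \defeq \bsm{\bA \\ -\bA}$, $\tb \defeq \bsm{b \\ -b}$, and $\tA^\top\tA = 2\,\bA^\top\bA \succ 0$. Set the surrogate $F_\apm(x) \defeq \smax_\apm(\tA x - \tb)$; the standard log-sum-exp estimate gives $0 \le f(x) - F_\apm(x) \le \apm\log(2m)$ for every $x$ (with the usual additive normalization of $\smax_\apm$), so $F_\apm$ uniformly approximates $f$ to within $\apm\log(2m)$.

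Next I would transfer the higher-order smoothness of $\smax_\apm$ established above to $F_\apm$ in the right norm. The bound on $\fourth\smax_\apm$ makes $\third\smax_\apm$ Lipschitz with constant $O(1/\apm^3)$ with respect to $\norm{\cdot}_\infty$ on $\reals^{2m}$; since $\norm{\tA h}_\infty \le \norm{\tA h}_2 = \sqrt{2}\,\norm{h}_{\bA^\top\bA}$, the chain rule gives that $F_\apm$ has $L_3$-Lipschitz third derivative in the norm $\norm{\cdot}_{\bA^\top\bA}$ with $L_3 = O(1/\apm^3)$ and \emph{no} extra conditioning factor. Here the hypothesis $\bA^\top\bA\succ0$ is used crucially: it makes $\norm{\cdot}_{\bA^\top\bA}$ a genuine norm (so the method can run in this geometry), and it will make the per-iteration systems nonsingular.

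I would then run a near-optimal third-order accelerated method \citep{gasnikovdec2018global,jiang2018optimal,bubeck2018near,bullins2018fast} on $F_\apm$ in the $\norm{\cdot}_{\bA^\top\bA}$-geometry, restricted to the ball $B$ of radius $2\norm{x_0-x^*}_{\bA^\top\bA}$ about $x_0$ (which contains $x^*$; a doubling search over this radius costs a constant factor if it is unknown). Its guarantee $F_\apm(x_N) - \min_{u\in B}F_\apm(u) = O\!\pa{L_3\norm{x_0-x^*}_{\bA^\top\bA}^4/N^5}$, together with $\min_{u\in B}F_\apm(u)\le F_\apm(x^*)\le f(x^*)$ and $f(x_N)\le F_\apm(x_N)+\apm\log(2m)$, yields
\begin{equation*}
f(x_N) - f(x^*) \ \le\ \apm\log(2m) \ +\ O\!\pa{\frac{L_3\,\norm{x_0-x^*}_{\bA^\top\bA}^4}{N^5}}.
\end{equation*}
Choosing $\apm = \Theta(\eps/\log m)$ makes the first term $\le \eps/2$ and forces $L_3 = O(\log^3(m)/\eps^3)$; requiring the second term to also be $\le \eps/2$ gives $N = O\!\pa{(L_3\norm{x_0-x^*}_{\bA^\top\bA}^4/\eps)^{1/5}} = O\!\pa{\log^{3/5}(m)\,\norm{x_0-x^*}_{\bA^\top\bA}^{4/5}/\eps^{4/5}}$, which is the stated iteration bound. (One could in principle push the exponent further by using order $p>3$, since $\fourth$ and higher derivatives of $\smax_\apm$ are controlled; we defer that to the higher-order-regularization discussion.)

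For the per-iteration cost I would observe that each step needs only closed-form derivative information of $F_\apm$ in terms of the softmax probability vector $p$, plus an approximate minimizer of a regularized third-order Taylor model; by the implementable-tensor-method machinery \citep{nesterov2018implementable,bullins2018fast} the latter reduces to $\log^{O(1)}(\Zcal/\eps)$ gradient evaluations and linear solves, where $\Zcal$ collects the problem parameters entering the internal binary searches. Since $\hess F_\apm(x) = \tfrac{1}{\apm}\big(\tA^\top\diag(p)\tA - (\tA^\top p)(\tA^\top p)^\top\big)$ and the model regularizer contributes a multiple of the norm Hessian $\bA^\top\bA$, after one Sherman--Morrison correction for the rank-one term each solve has the form $(\bA^\top\bD_x\bA)\phi = w_x$ with $\bD_x$ diagonal PSD, exactly the claimed oracle (nonsingular because $\bA^\top\bA\succ0$). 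The genuinely hard ingredient, the bound on $\fourth\smax_\apm$, is supplied earlier; within this theorem the main obstacles are the two bookkeeping points flagged above — transferring softmax smoothness into the $\bA^\top\bA$-norm with no spurious conditioning, and checking that the accelerated scheme's subproblems collapse to solves with $\bA^\top\bD_x\bA$ rather than a generic matrix, which is what makes the per-iteration cost comparable to \citep{christiano2011electrical,chin2013runtime,ene2019improved}.
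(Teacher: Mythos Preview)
Your proposal is correct and follows essentially the paper's route: smooth via $\smax_\apm$ with $\apm=\Theta(\eps/\log m)$, transfer the $O(1/\apm^3)$ third-order smoothness to the $\norm{\cdot}_{\bA^\top\bA}$ norm, apply $\fastq$ (Theorem~\ref{thm:smoothrate}), and reduce each subproblem to $\bA^\top\bD_x\bA$-solves via Sherman--Morrison exactly as in the proof of Corollary~\ref{cor:mainlinfty}. The paper obtains the $\bA^\top\bA$-norm smoothness directly from the $\ell_2$ bound on $\fourth\smax_\apm$ (Lemma~\ref{lem:smaxfourthbound} and Theorem~\ref{thm:mainlipschitz}) rather than via your $\ell_\infty$ detour, and it does not restrict to a ball (the accelerated guarantee already competes against any fixed comparator $x^*$), but these are cosmetic differences.
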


Our results are also applicable to
soft-margin SVMs, and so in particular, we get the following for
\mbox{$\ell_{1}$-SVM}~\cite{bradley1998feature, zhu2004onenorm, mangasarian2006exact}.
\begin{theorem}\label{thm:mainl1svm}
Let $f(x) = \lambda \norm{x}_1 + \frac{1}{m}\sum\limits_{i=1}^m \max\braces{0, 1-b_i\inner{a_i, x}}$ where $a_i \in \reals^d$, $b_i \in \reals$ for $i \in \bra{m}$, let $\tQ \defeq \bra{b_1a_1\ b_2a_2\ \dots\ b_ma_m}^\top$, and let $x^* \defeq \argmin\limits_{x\in\reals^d} f(x)$. There is a method, initialized with $x_0$, that outputs $x_N$ such that
\begin{equation*}
f(x_N) - f(x^*) \leq \eps
\end{equation*}
in $O\pa{\frac{\pa{\lambda d}^{3/5}(\lambda d + \norm{\tQ^\top \tQ}^2)^{1/5}\norm{x_0-x^*}^{4/5}}{\eps^{4/5}}}$ iterations, where each iteration requires $O(\log^{O(1)}(\Zcal/\eps))$ calls to a gradient oracle and linear system solver, for some problem-dependent parameter $\Zcal$.
\end{theorem}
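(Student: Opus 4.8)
The plan is to follow the same route as Theorem~\ref{thm:mainlinfty}: replace $f$ by a higher-order smooth softmax surrogate, then run a near-optimal third-order accelerated method on it. With $\tQ$ as in the statement (so $(\tQ x)_i = b_i\inner{a_i,x}$), set
\[
 f_\apm(x) \;\defeq\; \lambda\sum_{j=1}^{d}\sabs_\apm(x_j) \;+\; \frac1m\sum_{i=1}^{m}\shinge_\apm\!\pa{1-(\tQ x)_i},
\]
where $\sabs_\apm(t)=\smax_\apm(t,-t)$ smooths $\abs{\wcdot}$ and $\shinge_\apm(t)=\smax_\apm(t,0)$ smooths $\max\braces{0,\wcdot}$. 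Both are instances of $\smax_\apm$ on two points, so $f_\apm$ is convex (a nonnegative combination of softmaxes of affine maps) and coercive, $x^*=\argmin f$ is attained, and since $\smax_\apm$ on two points overestimates $\max$ by at most $\apm\log2$, we get $0\le f_\apm(x)-f(x)\le(\lambda d+1)\apm\log2$ uniformly in $x$. I will fix $\apm=\Theta\!\pa{\eps/(\lambda d)}$ so that $f_\apm$ is within $\eps/2$ of $f$ everywhere; it is the $\ell_1$ term, with its $d$ summands, that forces $\apm$ this small and is the source of the $\lambda d$ factors in the final bound.

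The technical heart is to control the Lipschitz constant $L_3$ of $\third f_\apm$, equivalently $\sup_x\norm{\fourth f_\apm(x)}_{\mathrm{op}}$ in the $\ell_2$ operator norm. Here I would combine the key fact established earlier — that $\smax_\apm$ has $\third$ Lipschitz with constant $O(1/\apm^3)$, certified via an $O(1/\apm^3)$ bound on its fourth derivative — with two elementary closure rules. First, for a separable sum $\sum_j h(x_j)$ the fourth-derivative tensor is diagonal, so $\norm{\fourth}_{\mathrm{op}}=\max_j\abs{h^{(4)}(x_j)}$ because $\sum_j v_j^4\le\norm{v}_2^4$; the $\ell_1$ term thus contributes $O(\lambda/\apm^3)$. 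Second, for a composition $\psi\circ\tQ$ with $\psi$ separable, $\fourth(\psi\circ\tQ)[v,v,v,v]=\fourth\psi[\tQ v,\tQ v,\tQ v,\tQ v]\le\pa{\max_i\abs{\psi_i^{(4)}}}\norm{\tQ v}_2^4\le\pa{\max_i\abs{\psi_i^{(4)}}}\norm{\tQ^\top\tQ}^2\norm{v}_2^4$; the hinge term thus contributes $O(\norm{\tQ^\top\tQ}^2/\apm^3)$ (the $1/m$ averaging only helps). Summing, $L_3=O\!\pa{(\lambda d+\norm{\tQ^\top\tQ}^2)/\apm^3}$. I expect this to be the main obstacle: one must make sure these tensor-norm bounds are in exactly the norm the accelerated method consumes, and track that $\apm$ enters both the additive error (pushing $\apm$ down) and $L_3$ (blowing it up as $\apm\to0$) — the balance of the two effects is what produces the $\eps^{-4/5}$ exponent rather than $\eps^{-1}$.

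Finally I would apply a near-optimal order-$3$ accelerated method \citep{gasnikovdec2018global,jiang2018optimal,bubeck2018near,bullins2018fast} to $f_\apm$ with target accuracy $\eps/2$. Its guarantee, used with the comparator $x^*$, reads $f_\apm(x_N)-f_\apm(x^*)=O\!\pa{L_3\norm{x_0-x^*}^4/N^5}$ after $N$ iterations, so with $R=\norm{x_0-x^*}$,
\[
 f(x_N)\;\le\;f_\apm(x_N)\;\le\;f_\apm(x^*)+O\!\pa{L_3R^4/N^5}\;\le\;f(x^*)+\tfrac{\eps}{2}+O\!\pa{L_3R^4/N^5},
\]
and $N=O\!\pa{(L_3R^4/\eps)^{1/5}}$ iterations suffice; substituting $L_3=O((\lambda d+\norm{\tQ^\top\tQ}^2)/\apm^3)$ and $\apm=\Theta(\eps/(\lambda d))$ gives exactly $O\!\pa{(\lambda d)^{3/5}(\lambda d+\norm{\tQ^\top\tQ}^2)^{1/5}R^{4/5}/\eps^{4/5}}$. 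For the per-iteration cost, each step approximately minimizes a regularized fourth-order Taylor model of $f_\apm$; invoking the $\fastq$-type subroutine of \cite{bullins2018fast} reduces this to $\log^{O(1)}(\Zcal/\eps)$ gradient evaluations and linear-system solves, and since $\hess f_\apm(x)=\bD_x^{(1)}+\tQ^\top\bD_x^{(2)}\tQ$ for diagonal $\bD_x^{(1)},\bD_x^{(2)}$, each such solve has the advertised form.
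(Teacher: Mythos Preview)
Your proposal is correct and follows essentially the same route as the paper: build the softmax surrogate $f_\apm=\lambda\,\softlone_\apm+\ssvm_\apm(\tQ\,\cdot)$, bound its fourth derivative to get $L_3=O((\lambda d+\norm{\tQ^\top\tQ}^2)/\apm^3)$, apply the near-optimal third-order accelerated method (Theorem~\ref{thm:smoothrate}/$\fastq$), and finish with the uniform approximation bound at $\apm=\Theta(\eps/(\lambda d))$. Your separable-sum observation that $\sum_j h^{(4)}(x_j)v_j^4\le(\max_j|h^{(4)}|)\norm{v}_2^4$ in fact gives the $\ell_1$ part an $O(\lambda/\apm^3)$ contribution rather than the paper's cruder $O(\lambda d/\apm^3)$, so your argument is slightly sharper than needed (the extra $d$ you reinstate in ``Summing, $L_3=O((\lambda d+\ldots)/\apm^3)$'' is unnecessary but harmless for matching the stated bound).
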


We emphasize that such rates were not known before, to the best of our knowledge. Furthermore, our stronger oracle model seems necessary for going beyond an $O(1/\eps)$ dependence due to tight upper and lower bounds known for first-order methods with prox-oracle access, when the convex function is neither smooth nor strongly convex \cite{woodworth2016tight}. In addition, it is well-known that some structured linear systems can be solved in nearly-linear time \cite{spielman2004nearly, koutis2012fast}, making the per-iteration complexity competitive with first-order methods in such settings.

We also remark that determining the precise iteration complexities attainable under various higher-order oracle models and smoothness assumptions has been an incredibly active area of research \citep{nesterov2006cubic, nesterov2008accelerating, baes2009estimate, monteiro2013accelerated, agarwal2017finding, gasnikovdec2018global, jiang2018optimal, bubeck2018near, bullins2018fast}, and so our results complement these by extending their reach to \emph{non-smooth} problems under higher-order oracle access.

\subsection{Related work}
\paragraph{Smooth approximation techniques:} It was shown by Nesterov~\cite{nesterov2005smooth} that one can go beyond the black-box convergence of $O(1/\eps^2)$ to achieve an $O(1/\eps)$ rate for certain classes of non-smooth functions. The main idea was to carefully smooth the well-structured function, and the work goes on to present several applications of the method, including $\linfty$ and $\ell_1$ regression, in addition to saddle-point games. However, the methods for all of these examples incur an $O(1/\eps)$ dependence which remains in several works that build upon these techniques \cite{sherman2017area, sidford2018coordinate}. For a more comprehensive overview, we refer the reader to \cite{beck2012smoothing}.

\paragraph{Higher-order accelerated methods:} Several works have considered accelerated variants of optimization methods based on access to higher-order derivative information. Nesterov~\cite{nesterov2008accelerating} showed that one can accelerate cubic regularization, under a Lipschitz Hessian condition, to attain faster convergence, and these results were later generalized by Baes~\cite{baes2009estimate} to arbitrary higher-order oracle access under the appropriate notions of (higher-order) smoothness. The rate attained in \cite{nesterov2008accelerating} was further improved upon by Monteiro and Svaiter~\cite{monteiro2013accelerated}, and lower bounds have established that the oracle complexity of this result is nearly tight (up to logarithmic factors) when the Hessian is Lipschitz \cite{arjevani2018oracle}. Until recently, however, it was an open question whether these lower bounds are tight for general higher-order oracle access (and smoothness), though this question has been mostly resolved as a result of several works developed over the past year \cite{gasnikovdec2018global, jiang2018optimal, bubeck2018near, bullins2018fast}.

\paragraph{$\linfty$ regression:} Various regression problems play a central role in numerous computational and machine learning tasks. Designing better methods for $\linfty$ regression in particular has led to faster approximate max flow algorithms \cite{christiano2011electrical, chin2013runtime, kelner2014almost, sherman2017area, sidford2018coordinate}. Recently, Ene and Vladu \cite{ene2019improved} presented a method for $\ell_\infty$ regression, based on iteratively reweighted
least squares, that achieves an iteration complexity of $O(m^{1/3}\log(1/\eps)/\eps + \log(m/\eps)/\eps^2)$. We note that their rate of convergence has an $O(m^{1/3})$ dependence, whereas our result, in contrast, includes a diameter term, i.e., $\norm{x_0-x^*}^{4/5}$.

\paragraph{Soft-margin SVM:} Support vector machines (SVMs) \cite{cortes1995support} have enjoyed widespread adoption for classification tasks in machine learning \cite{cristianini2000introduction}. For the soft-margin version, several approaches have been proposed for dealing with the non-smooth nature of the hinge loss. The standard approach is to cast the ($\ell_2$-regularized) SVM problem as a quadratic programming problem \cite{platt1998sequential, boyd2004convex}. Stochastic sub-gradient methods have also been successful due to their advantage in per-iteration cost \cite{shalev2011pegasos}. While $\ell_2$-SVM is arguably the most well-known variant, $\ell_p$-SVMs, for general $p \geq 1$, have also been studied \cite{bradley1998feature}. $\ell_1$-SVMs \cite{zhu2004onenorm, mangasarian2006exact} are appealing, in particular, due to their sparcity-inducing tendencies, though they forfeit the strong convexity guarantees that come with $\ell_2$ regularization \cite{allen2016optimal}. 

\paragraph{Interior-point methods:} It is well-known that both $\linfty$ regression and $\ell_1$-SVM can be expressed as linear programs \cite{boyd2004convex, bradley1998feature}, and thus are amenable to fast LP solvers \cite{LeeS14, cohen2018solving}. In particular, this means that each can be solved in either $\tilde{O}(d^{\omega})$ time (where $\omega \sim 2.373$ is the matrix multiplication constant) \cite{cohen2018solving}, or in $\tilde{O}(\sqrt{d})$ linear system solves \cite{LeeS14}. We note that, while these methods dominate in the low-error regime, our method is competitive, under modest choices of $\eps$ and favorable linear system solves, when the diameter term $\norm{x_0-x^*}^{4/5} \leq O(\sqrt{d})$ (up to logarithmic factors).

\subsection{Our contributions}
The main contributions of this work are as follows:
\begin{enumerate}
	\item We provide improved higher-order oracle complexity for several important \emph{non-smooth} optimization problems, by combining near-optimal higher-order acceleration with the appropriate smooth approximations.
	\item By leveraging efficient tensor methods \cite{nesterov2018implementable, bullins2018fast}, we go beyond the oracle model to establish overall computational complexity for these non-smooth problems that, for certain parameter regimes, improves upon previous results.
\end{enumerate}

We further stress that the convergence guarantees presented in this work surpass the tight upper and lower bounds known under first-order and prox-oracle access, for non-smooth and non-strongly convex functions \cite{woodworth2016tight}. Thus, we observe that higher-order oracle access provides an advantage not only for functions that are sufficiently smooth, but also in the \emph{non-smooth} setting.

In addition, we wish to note the importance of relying on more recent advances in near-optimal higher-order acceleration \citep{gasnikovdec2018global, jiang2018optimal, bubeck2018near, bullins2018fast}. We may recall in particular that the higher-order acceleration scheme in \cite{baes2009estimate} achieves a rate of $O\pa{\pa{L_p/\eps}^{1/(p+1)}}$ (assuming $p^{th}$ derivative is $L_p$-Lipschitz). Thus, for the case of $p=3$ (whereby $L_3 \approx 1/\eps^3$), this approach would not improve upon the previous $O(1/\eps)$ dependence since, roughly speaking, we would only expect to recover a rate of $O\pa{\pa{1/\eps^{3+1}}^{1/4}} = O(1/\eps)$.

While one may also consider repeatedly applying Gaussian smoothing to induce higher-order smoothness, this approach suffers from two primary drawbacks: (1) a straightforward application would incur an additional $O(\poly(d))$ term, and (2) it would become necessary to compute higher-order derivatives of the Gaussian-smoothed function.

\section{Setup}
Let $u, v$ denote vectors in $\reals^d$. Throughout, we let $v_i$ denote the $i$-th coordinate of $v$, and we let $\bra{k} \defeq \braces{1, \dots, k}$ for $k \geq 1$. We let $u \circ v$ denote the Hadamard product, i.e., $(u \circ v)_i = u_iv_i$ for all $i \in [d]$. Furthermore, we will define $v^2 \defeq v \circ v$ and $v^3 \defeq v \circ v \circ v$. We let $\Delta_m \defeq \braces{x \in \reals^m : \sum_i x_i = 1, x_i \geq 0}$ denote the $m$-dimensional simplex. We let $\norm{v}_p$ denote the standard $\ell_p$ norm, and we drop the subscript to let $\norm{\cdot}$ denote the $\ell_2$ norm. Let $\bB \in \reals^{d\times d}$ be a symmetric positive-definite matrix, i.e., $\bB \succ 0$. Then, we may define the matrix-induced norm of $v$ (w.r.t. $\bB$) as $\norm{v}_\bB \defeq \sqrt{v^\top \bB v}$, and we let $\norm{\bB} \defeq \lambda_{\max}(\bB)$.

We now make formal a (higher-order) notion of smoothness. Specifically, for $p \geq 1$, we say a $p$-times differentiable function $f(\cdot)$ is \emph{$L_p$-smooth (of order $p$)} w.r.t. $\norm{\cdot}_\bB$ if the $p^{th}$ derivative is $L_p$-Lipschitz continuous, i.e., for all $x, y \in \reals^d$,
\begin{equation}\label{eq:highersmooth}
    \norm{\nab{p} f(y) - \nab{p} f(x)}_{\bB}^* \leq L_p\norm{y - x}_\bB,
\end{equation}
where we define
\begin{equation*}
\norm{\nab{p} f(y) - \nab{p} f(x)}_{\bB}^* \defeq \max\limits_{h : \norm{h}_\bB \leq 1} \Bigl|\nab{p} f(y) [h]^{p} - \nab{p} f(x) [h]^{p}\Bigr| \ ,\end{equation*}
and where 
\begin{equation*}
\nab{p} f(x) [h]^{p} \defeq \nab{p} f(x) \underbrace{[h, h, \dots, h]}_{p\ \text{times}}.
\end{equation*}

Observe that, for $p = 1$, this recovers the usual notion of smoothness, and so our convention will be to refer to first-order smooth functions as simply smooth. A complementary notion is that of strong convexity, and its higher-order generalization known as uniform convexity \cite{nesterov2008accelerating}. In particular, $f(\cdot)$ is \emph{$\sigma_p$-uniformly convex (of order $p$)} with respect to $\norm{\cdot}_\bB$ if, for all $x, y \in \reals^d$,
\begin{equation*}
f(y) \geq f(x) + \inner{\grad f(x), y-x} + \frac{\sigma_p}{p}\norm{y-x}_\bB^p.
\end{equation*} Again, we may see that this captures the typical $\sigma_2$-strong convexity (w.r.t. $\norm{\cdot}_\bB$) by setting $p = 2$.

\section{Softmax approximation and $\linfty$ regression}
We recall from \cite{nesterov2005smooth, sidford2018coordinate} the standard softmax approximation, for $x \in \reals^m$:
\begin{equation}\label{eq:smaxdef}
\smax_\apm(x) \defeq \apm\log\pa{\sum\limits_{i=1}^m e^{\frac{x_i}{\apm}}}.
\end{equation}
It is straightforward to observe that \eqref{eq:smaxdef} is $\frac{1}{\apm}$-smooth, and furthermore that it smoothly approximates the max function, i.e., $\max_{j \in [m]} x_j$.
\begin{fact}\label{fact:smaxapprox}
For all $x \in \reals^m$,
\begin{equation}
\max\limits_{j \in \bra{m}} x_j \leq \smax_\apm(x) \leq \apm\log(m) + \max\limits_{j \in \bra{m}} x_j.
\end{equation}
\end{fact}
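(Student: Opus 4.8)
The plan is to reduce everything to two elementary observations about the sum $S \defeq \sum_{i=1}^m e^{x_i/\apm}$: that it is at least its largest term, and at most $m$ times its largest term. Since $\apm > 0$, both $t \mapsto \log t$ and $t \mapsto \apm t$ are strictly increasing, so $\smax_\apm(x) = \apm \log S$ inherits the ordering of these bounds. Write $x_* \defeq \max_{j \in \bra{m}} x_j$ and fix an index $j^*$ with $x_{j^*} = x_*$.

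For the lower bound I would simply note that every summand is positive, hence $S \geq e^{x_{j^*}/\apm} = e^{x_*/\apm}$; applying $\apm\log(\cdot)$ to both sides gives $\smax_\apm(x) \geq \apm\log\pa{e^{x_*/\apm}} = x_*$, which is the left inequality. For the upper bound I would use the complementary estimate $e^{x_i/\apm} \leq e^{x_*/\apm}$ for every $i \in \bra{m}$, so that $S \leq m\, e^{x_*/\apm}$; applying $\apm\log(\cdot)$ and using $\log(ab) = \log a + \log b$ yields $\smax_\apm(x) \leq \apm\log(m) + \apm\log\pa{e^{x_*/\apm}} = \apm\log(m) + x_*$, which is the right inequality. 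Concatenating the two displays proves the claimed sandwich.

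I do not anticipate any genuine obstacle here: the only ingredients are positivity of the exponential, the strict monotonicity of $\log$ and of multiplication by $\apm>0$ (to transfer the inequalities through $\smax_\apm$), and the additive identity for the logarithm. If one wanted to be slightly more careful, the only point worth spelling out is that $m \geq 1$ guarantees $\apm\log(m) \geq 0$, so the upper bound is indeed weaker than equality with $x_*$ and the fact is consistent; but this is not needed for the inequalities themselves.
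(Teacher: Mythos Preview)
Your argument is correct: bounding $S=\sum_i e^{x_i/\apm}$ below by its largest term and above by $m$ times that term, then applying the increasing map $t\mapsto\apm\log t$, is exactly the standard derivation. The paper itself does not supply a proof of this statement---it is recorded as a \emph{Fact} and cited from prior work---so there is nothing further to compare.
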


Note that this approximation can be used for $\norm{x}_\infty$, since $\norm{x}_\infty = \max\limits_{j \in \bra{m}} \abs{x_j}$, and $\abs{x_j} = \max\braces{x_j, -x_j}$.
It follows that we may determine a smooth approximation of $\linfty$ regression, i.e.,
\begin{equation}\label{eq:linftyreg}
\min\limits_{x \in \reals^d} \norm{\tA x - \tb}_\infty, \quad \tA \in \reals^{m \times d}, \ \ \tb \in \reals^m,
\end{equation}
as $\smax_\apm(\bA x-b)$, where $\bA = \begin{pmatrix} \tA \\ -\tA \end{pmatrix}$ and $b = \begin{pmatrix} \tb \\ -\tb \end{pmatrix}$. 

Having now formalized the connection between $\smax_\apm(\cdot)$ and $\norm{\cdot}_\infty$, we assume throughout the rest of the paper that $\bA \in \reals^{m \times d}$ and $b \in \reals^m$, as the difference in dimension between $\tA$, $\tb$ and $\bA$, $b$ only affects the final convergence by a constant factor. In addition, we will assume that $\bA$ is such that $\bA^\top \bA \succ 0$, and thus we consider the regime where $m \geq d$.

\subsection{Softmax calculus}
To simplify notation, we let $Z_\apm(x) = \sum\limits_{i=1}^m e^{\frac{x_i}{\apm}}$, and so $\smax_\apm(x) = \apm\log\pa{Z_\apm(x)}$. Note that we have
\begin{equation}
\grad \smax_\apm(x)_i = \frac{e^{\frac{x_i}{\apm}}}{Z_\apm(x)}, \quad i \in \bra{m}.
\end{equation}
Furthermore, since $\grad \smax_\apm(x) \in \Delta_m$ for all $x \in \reals^m$, it follows that, for all $p \geq 1$,
\begin{equation}\label{eq:gradnormineq}
\norm{\grad \smax_\apm(x)}_p \leq 1.
\end{equation}
We may also see that
\begin{equation}\label{eq:smaxhess}
\hess \smax_\apm(x) = \frac{1}{\apm}\pa{\diag(\grad \smax_\apm(x)) - \grad \smax_\apm(x)\grad \smax_\apm(x)^\top}.
\end{equation}
Since $\hess \smax_\apm(x)$ is a symmetric bilinear form for all $x \in \reals^m$, it follows that, for all $h_1, h_2 \in \reals^m$,
\begin{equation}\label{eq:smaxhessh1h2}
\hess \smax_\apm(x)[h_1,h_2] = \frac{1}{\apm}\pa{\inner{\grad \smax_\apm(x), h_1\circ h_2} - \inner{\grad \smax_\apm(x), h_1}\cdot \inner{\grad \smax_\apm(x), h_2}}.
\end{equation}

\subsection{Higher-order smoothness}
As mentioned previously, one of the key observations of this work is that softmax is equipped with favorable higher-order smoothness properties. We begin by showing a bound on its fourth derivative, as established by the following lemma, and we provide its proof in the appendix.

\begin{lemma}\label{lem:smaxfourthbound}
For all $x$, $h \in \reals^d$,
\begin{equation}\label{eq:smaxfourthbound}
\abs{\fourth \smax_\apm(x)[h,h,h,h]} \leq \frac{15}{\apm^3}\norm{h}_2^4.
\end{equation}
\end{lemma}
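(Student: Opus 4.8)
The plan is to compute $\fourth \smax_\apm(x)[h,h,h,h]$ explicitly and bound it term by term. First I would set up convenient notation: let $p = \grad \smax_\apm(x) \in \Delta_m$, and for a fixed direction $h \in \reals^m$ (note we really work in $\reals^m$ here, since $\smax_\apm$ acts on $\bA x - b \in \reals^m$), define the ``moments'' $M_k \defeq \inner{p, h^k} = \sum_i p_i h_i^k$ for $k = 1, 2, 3, 4$, where $h^k$ is the $k$-fold Hadamard power. From \eqref{eq:smaxhessh1h2} we already have $\hess \smax_\apm(x)[h,h] = \frac{1}{\apm}(M_2 - M_1^2)$. The key structural fact is that differentiating $p_i = e^{x_i/\apm}/Z_\apm(x)$ in direction $h$ gives $\grad p_i [h] = \frac{1}{\apm} p_i (h_i - M_1)$, i.e. differentiation acts on the "weights" $p_i$ by multiplying by $\frac{1}{\apm}(h_i - M_1)$ and, crucially, $M_1 = \inner{p,h}$ itself depends on $x$ so it too must be differentiated, with $\grad M_1[h] = \frac{1}{\apm}(M_2 - M_1^2)$, and similarly $\grad M_k[h] = \frac{1}{\apm}(M_{k+1} - M_k M_1)$.

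With this calculus in place, I would differentiate $\hess \smax_\apm(x)[h,h] = \frac{1}{\apm}(M_2 - M_1^2)$ twice more in the direction $h$. The third derivative becomes $\third \smax_\apm(x)[h,h,h] = \frac{1}{\apm^2}(M_3 - 3M_1 M_2 + 2M_1^3)$ — this is the third central moment of $h$ under $p$, up to the $1/\apm^2$ factor. Differentiating once more yields $\fourth \smax_\apm(x)[h,h,h,h] = \frac{1}{\apm^3}(M_4 - 4M_1 M_3 - 3M_2^2 + 12 M_1^2 M_2 - 6 M_1^4)$, which (up to scaling) is the fourth central moment-type expression; I would double-check the combinatorial coefficients by carefully applying the product rule to each of the three terms $M_3$, $-3M_1M_2$, $2M_1^3$.

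The final step is the bound. Writing $\bar h_i \defeq h_i - M_1$, one can re-express $\third \smax_\apm(x)[h,h,h] = \frac{1}{\apm^2}\Exp_p[\bar h^3]$ and $\fourth \smax_\apm(x)[h,h,h,h] = \frac{1}{\apm^3}(\Exp_p[\bar h^4] - 3(\Exp_p[\bar h^2])^2)$ — i.e. $\frac{1}{\apm^3}$ times (fourth central moment minus three times the variance squared), which is exactly the (scaled) excess-kurtosis numerator. Since $p$ is a probability vector, $|\Exp_p[\bar h^4]| \le \max_i \bar h_i^4$ and $\Exp_p[\bar h^2] \ge 0$, so $|\fourth \smax_\apm(x)[h,h,h,h]| \le \frac{1}{\apm^3}(\Exp_p[\bar h^4] + 3(\Exp_p[\bar h^2])^2)$. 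Then I bound $\Exp_p[\bar h^2] = M_2 - M_1^2 \le M_2 \le \norm{h}_\infty^2 \le \norm{h}_2^2$ and $\Exp_p[\bar h^4] \le \norm{\bar h}_\infty^4$; the slightly delicate point is controlling $\norm{\bar h}_\infty = \norm{h - M_1 \bone}_\infty$, for which I would use $|h_i - M_1| \le |h_i| + |M_1| \le 2\norm{h}_\infty$, or more carefully observe $|\Exp_p[\bar h^4]| \le \Exp_p[|h_i - M_1|^4]$ and expand, keeping everything in terms of $M_k$ and bounding $|M_k| \le \norm{h}_2^k$. Tallying the constants should land at $\frac{15}{\apm^3}\norm{h}_2^4$ — likely $1 + 12 + \dots$ split so that the worst-case coefficient on $\norm{h}_2^4$ is $15$. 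The main obstacle is purely bookkeeping: getting the multinomial coefficients in the fourth derivative right after three rounds of the product rule (since every $M_k$ is itself $x$-dependent), and then choosing the coarsest bounds that still yield the clean constant $15$ rather than something larger.
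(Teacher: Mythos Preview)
Your approach is correct and in fact conceptually cleaner than the paper's, but it follows a genuinely different route. The paper differentiates the bilinear form \eqref{eq:smaxhessh1h2} recursively, carrying along mixed-argument tensors such as $\hess\smax_\apm(x)[h^2,h]$, $\hess\smax_\apm(x)[h^3,h]$, and $\third\smax_\apm(x)[h^2,h,h]$; it bounds each of these separately via H\"older and $\|\grad\smax_\apm(x)\|_p\le 1$, and the final decomposition $\fourth\smax_\apm[h]^4 = \frac{1}{\apm}(\third\smax_\apm[h^2,h,h] - 2M_1\,\third\smax_\apm[h,h,h] - 2(\hess\smax_\apm[h,h])^2)$ yields $5+8+2=15$. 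You instead recognize that $t\mapsto\smax_\apm(x+th)$ is (up to the $\apm$ scaling) the cumulant generating function of $h$ under $p$, so the $k$-th directional derivative is $\apm^{-(k-1)}\kappa_k$; in particular $\fourth\smax_\apm(x)[h]^4 = \apm^{-3}(\mu_4 - 3\mu_2^2)$. This is more transparent and generalizes to all orders immediately.

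One caveat on the constant: your two stated bounds do not quite hit $15$. Using $\mu_4\le\|\bar h\|_\infty^4$ with $|\bar h_i|\le 2\|h\|_\infty$ gives $16+3=19$, and expanding the raw-moment formula and applying $|M_k|\le\|h\|_2^k$ termwise gives $1+4+3+12+6=26$. To recover the lemma as stated (or better), sharpen the $\mu_4$ bound to $\mu_4=\Exp_p[\bar h^2\cdot\bar h^2]\le(\max_i\bar h_i^2)\,\mu_2\le 4\|h\|_\infty^2\cdot\|h\|_2^2\le 4\|h\|_2^4$, which combined with $3\mu_2^2\le 3\|h\|_2^4$ yields a constant of $7$, strictly better than the paper's $15$. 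So your route actually buys a tighter constant once the bookkeeping is done carefully; the paper's recursive decomposition buys only that the constant $15$ falls out of its particular intermediate bounds without further thought.
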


It will also be helpful to note the following standard result on how a bound on the fourth derivative implies Lipschitz-continuity of the third derivative.
\begin{lemma}\label{lem:lipschitz}
Let $f(\cdot)$ be a $4$-times differentiable function, let $L_3 > 0$ and $\bA$ be such that $\bA^\top \bA \succ 0$, and suppose, for all $\zeta, h \in \reals^d$,
\begin{equation}\label{eq:fourthboundsup}
\abs{\fourth f(\zeta)[h,h,h,h]} \leq L_3\norm{\bA h}_2^4.
\end{equation}
Then we have that, for all $x, y \in \reals^d$,
\begin{equation}
\norm{\third f(y)-\third f(x)}_{\bA^\top \bA}^* \leq L_3\norm{y-x}_{\bA^\top \bA}.
\end{equation}
\end{lemma}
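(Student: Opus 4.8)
The plan is to pass from the pointwise fourth-derivative bound \eqref{eq:fourthboundsup} to the Lipschitz bound on $\third f$ by integrating along the segment from $x$ to $y$. Fix $x, y \in \reals^d$ and an arbitrary direction $h$ with $\norm{h}_{\bA^\top \bA} \leq 1$; by definition of the dual norm $\norm{\cdot}_{\bA^\top\bA}^*$, it suffices to bound $\abs{\third f(y)[h]^3 - \third f(x)[h]^3}$ by $L_3 \norm{y-x}_{\bA^\top\bA}$. Define $g(t) \defeq \third f(x + t(y-x))[h]^3$ for $t \in [0,1]$. Then $g$ is differentiable with $g'(t) = \fourth f(x + t(y-x))[y-x, h, h, h]$, so that $\third f(y)[h]^3 - \third f(x)[h]^3 = \int_0^1 g'(t)\, dt$, and hence
\begin{equation*}
\abs{\third f(y)[h]^3 - \third f(x)[h]^3} \leq \int_0^1 \abs{\fourth f(x+t(y-x))[y-x,h,h,h]}\, dt.
\end{equation*}

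The main obstacle is that the hypothesis \eqref{eq:fourthboundsup} only controls the \emph{diagonal} evaluation $\fourth f(\zeta)[v]^4$, whereas the integrand above is the \emph{mixed} form $\fourth f(\zeta)[y-x, h, h, h]$. To bridge this gap I would invoke the standard fact that for a symmetric $p$-linear form $T$, a bound of the shape $\abs{T[v]^p} \leq C\norm{v}^p$ for all $v$ (in some norm) implies $\abs{T[v_1, \dots, v_p]} \leq C \prod_i \norm{v_i}$; this follows from polarization, or more directly from the observation that the operator norm of a symmetric form equals its value on the diagonal. Applying this with the norm $v \mapsto \norm{\bA v}_2 = \norm{v}_{\bA^\top\bA}$ and the constant $L_3$ gives
\begin{equation*}
\abs{\fourth f(\zeta)[y-x, h, h, h]} \leq L_3 \norm{y-x}_{\bA^\top\bA}\, \norm{h}_{\bA^\top\bA}^3 \leq L_3 \norm{y-x}_{\bA^\top\bA},
\end{equation*}
using $\norm{h}_{\bA^\top\bA} \leq 1$ in the last step.

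Substituting this into the integral bound yields $\abs{\third f(y)[h]^3 - \third f(x)[h]^3} \leq L_3 \norm{y-x}_{\bA^\top\bA}$, and taking the maximum over all admissible $h$ gives exactly $\norm{\third f(y) - \third f(x)}_{\bA^\top\bA}^* \leq L_3 \norm{y-x}_{\bA^\top\bA}$, as claimed. The only points requiring care are the justification that differentiation under the integral sign is valid (immediate from $f$ being $4$-times differentiable, so $\fourth f$ is continuous along the compact segment) and the symmetric-multilinear-form inequality, which I would either cite as standard or prove in one line via polarization; everything else is a routine application of the fundamental theorem of calculus. Note that the same argument, with $\bA = \bI$, specializes to the version of the statement needed to deduce higher-order smoothness of $\smax_\apm$ from Lemma~\ref{lem:smaxfourthbound}.
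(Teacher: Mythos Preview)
Your proof is correct and follows essentially the same route as the paper: bound the (diagonal) norm of $\nabla^4 f$ by $L_3$ and then pass to the Lipschitz bound on $\nabla^3 f$ via the mean value theorem / fundamental theorem of calculus along the segment. The paper compresses everything into ``a standard mean value theorem argument,'' whereas you have usefully made explicit the one nontrivial step it hides---namely, that bounding the mixed form $\nabla^4 f(\zeta)[y-x,h,h,h]$ from the diagonal hypothesis requires the symmetric-multilinear-form fact (valid with constant~$1$ here because $\norm{\cdot}_{\bA^\top\bA}$ is a Hilbert norm).
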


Having determined these bounds, we now provide smoothness guarantees for the softmax approximation to $\linfty$ regression.
\begin{theorem}\label{thm:mainlipschitz}
Let $f(x) = \smax_\apm(\bA x-b)$. Then, $f(x)$ is (order 3) $\frac{15}{\apm^3}$-smooth w.r.t. $\norm{\cdot}_{\bA^\top \bA}$.
\end{theorem}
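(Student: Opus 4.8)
The plan is to chain together the two lemmas just stated: Lemma~\ref{lem:smaxfourthbound} bounds the fourth derivative of the softmax, and Lemma~\ref{lem:lipschitz} converts such a bound into Lipschitz continuity of the third derivative in the $\bA^\top\bA$ norm. The only new ingredient needed is a higher-order chain rule through the affine map $x \mapsto \bA x - b$, which is painless precisely because that map is affine.

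First I would record the composition identity. Write $f(x) = \smax_\apm(\bA x - b)$. Since $\nab{k} f(x)[h]^k = \frac{d^k}{dt^k}\big|_{t=0} f(x+th)$ and $f(x+th) = \smax_\apm\bigl(\bA x - b + t\,\bA h\bigr)$, differentiating $k$ times in $t$ along the line $s \mapsto \bA x - b + s\,\bA h$ (all higher derivatives of the affine inner map vanish, so no Faà di Bruno correction terms appear) gives
\[
\nab{k} f(x)[h]^k = \nab{k}\smax_\apm(\bA x - b)[\bA h]^k , \qquad k = 1, 2, 3, 4 .
\]
In particular $\fourth f(x)[h,h,h,h] = \fourth \smax_\apm(\bA x - b)[\bA h,\bA h,\bA h,\bA h]$ for all $x, h \in \reals^d$.

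Next I would apply Lemma~\ref{lem:smaxfourthbound} with the vector $\bA h \in \reals^m$ playing the role of the direction, which yields
\[
\bigl|\fourth f(x)[h,h,h,h]\bigr| \;=\; \bigl|\fourth \smax_\apm(\bA x - b)[\bA h,\bA h,\bA h,\bA h]\bigr| \;\leq\; \frac{15}{\apm^3}\,\norm{\bA h}_2^4 .
\]
This is exactly hypothesis~\eqref{eq:fourthboundsup} of Lemma~\ref{lem:lipschitz} with $L_3 = 15/\apm^3$ (recalling that $\bA^\top\bA \succ 0$ is assumed throughout). Invoking Lemma~\ref{lem:lipschitz} then gives $\norm{\third f(y) - \third f(x)}_{\bA^\top\bA}^* \leq \frac{15}{\apm^3}\norm{y-x}_{\bA^\top\bA}$ for all $x, y \in \reals^d$, which is precisely the statement~\eqref{eq:highersmooth} (with $p=3$, $\bB = \bA^\top\bA$) that $f$ is order-$3$ $\frac{15}{\apm^3}$-smooth w.r.t. $\norm{\cdot}_{\bA^\top\bA}$.

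I do not expect a genuine obstacle here: the computation is essentially a two-line reduction. If anything, the step requiring the most care is the higher-order chain rule identity in the first paragraph, but since the inner map is affine it collapses to the elementary one-dimensional fact that $\frac{d^k}{dt^k}\smax_\apm(c + tv) = \nab{k}\smax_\apm(c+tv)[v]^k$, so there is nothing subtle to verify. (Alternatively, one could simply note that Lemma~\ref{lem:smaxfourthbound} already bounds $\fourth\smax_\apm$ in an arbitrary direction and absorb $\bA$ into that direction directly.)
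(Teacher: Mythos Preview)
Your proposal is correct and follows essentially the same approach as the paper: both establish the chain-rule identity $\fourth f(x)[h]^4 = \fourth\smax_\apm(\bA x-b)[\bA h]^4$, apply Lemma~\ref{lem:smaxfourthbound} to bound this by $\frac{15}{\apm^3}\norm{\bA h}_2^4$, and then invoke Lemma~\ref{lem:lipschitz}. Your one-line derivation of the chain rule via $\frac{d^k}{dt^k}\big|_{t=0} f(x+th)$ is arguably cleaner than the paper's explicit step-by-step differentiation, but the substance is identical.
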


\section{Higher-order acceleration}
We now rely on recent techniques for near-optimal higher-order acceleration \citep{gasnikovdec2018global, jiang2018optimal, bubeck2018near, bullins2018fast}.
For these higher-order iterative methods, assuming $f(\cdot)$ is (order $p$) $L_p$-smooth, the basic idea for each iteration is to determine a minimizer of the subproblem given by the $p^{th}$-order Taylor expansion $(p \geq 1)$, centered around the current iterate $x_t$,
 plus a $(p+1)^{th}$-order regularization term, i.e.,
 \begin{equation}
     x_{t+1} = \argmin\limits_{x \in \reals^d} \Omega_{x_t, p, \bB}(x),
 \end{equation}
 where, for all $x, y \in \reals^d$,
 \begin{equation}\label{eq:regmodel}
\Omega_{x, p, \bB}(y) \defeq f(x) + \sum\limits_{i=1}^p \frac{1}{i!}\nab{i} f(x)[y-x]^i + \frac{2pL_p}{(p+1)!}\norm{y-x}_\bB^{p+1}.
\end{equation}
 
 Given access to such an oracle, it is possible to combine it with a carefully-tuned accelerated scheme to achieve an improved iteration complexity when the $p^{th}$ derivative is Lipschitz. In contrast to the higher-order accelerated scheme of Nesterov \cite{nesterov2008accelerating} (later generalized by Baes \cite{baes2009estimate}), these near-optimal rates rely on a certain additional binary search procedure, as first observed by Monteiro and Svaiter \cite{monteiro2013accelerated}.

In particular, we are motivated by the $\fastq$ method \citep{bullins2018fast}, whereby we provide a sketch of the algorithm here. Note that, for the sake of clarity, various approximations found in the precise algorithm have been omitted, and we refer the reader to \cite{bullins2018fast} for the complete presentation.

\begin{algorithm}[h]
	\caption{$\fastq$ (Sketch)}

	\begin{algorithmic}\label{alg:fastq}
		\STATE {\bfseries Input:} $x_0 = 0$,  $A_0 = 0$, $\bB \succ 0$, $N$.
		\STATE Define $\psi_0(x) \defeq \frac{1}{2}\norm{x - x_0}_\bB^2$.
		\FOR{$k=0$ {\bfseries to} $N-1$}
        \STATE $v_k = \argmin\limits_{x \in \reals^d} \psi_k(x)$
        \STATE Find $\rho_k > 0$, $x_{k+1} \in \reals^d$ such that $\rho_k \approx \norm{x_{k+1} - y_k}_{\bB}^2$, where:
        \begin{align*}
        a_{k+1} &= \frac{1 + \sqrt{1+4L_3A_k\rho_k}}{2L_3\rho_k} \quad\quad\quad\pa{\implies \pa{{a_{k+1}}}^2 = \frac{A_k + a_{k+1}}{L_3 \rho_k}}\\
        A_{k+1} &= A_k + a_{k+1}, \qquad \tau_k = \frac{a_{k+1}}{A_{k+1}}, \qquad y_k = (1-\tau_k)x_k + \tau_k v_k\\
        x_{k+1} &= \argmin\limits_{x\in\reals^d} \Omega_{y_k, 3, \bB}(x)\qquad \text{(As defined in eq.\eqref{eq:regmodel}.)}
        \end{align*}\vspace{-0.3cm}
		\STATE $\psi_{k+1} = \psi_k + a_{k+1}\bra{f(x_{k+1}) + \inner{\grad f(x_{k+1}), x - x_{k+1}}}$
		\ENDFOR
		\RETURN $x_{N}$
	\end{algorithmic}
\end{algorithm}

As established by Bullins \cite{bullins2018fast}, $\fastq$ provides us with the following guarantee.

\begin{theorem}[\cite{bullins2018fast}, Theorem 4.1]\label{thm:smoothrate} Suppose $f(x)$ is (order 3) $L_3$-smooth w.r.t. $\norm{\cdot}_\bB$ for $\bB \succ 0$. Then, $\fastq$ finds a point $x_N$ such that
\begin{equation*}
f(x_N) - f(x^*) \leq \eps
\end{equation*}
in $O\pa{\pa{\frac{L_3 \norm{x_0-x^*}_\bB^4}{\eps}}^{1/5}}$ iterations, where each iteration requires $O(\log^{O(1)}(\Zcal/\eps))$ calls to a gradient oracle and linear system solver, and where $\Zcal$ is a polynomial in various problem-dependent parameters.
\end{theorem}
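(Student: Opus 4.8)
The plan is to prove this within the accelerated estimate-sequence (Monteiro--Svaiter) framework underlying near-optimal higher-order acceleration, specialized to order $p=3$. The argument splits into three parts: (i) an upper bound $\psi_N^* \defeq \min_x \psi_N(x) \le A_N f(x^*) + \tfrac12\norm{x_0-x^*}_\bB^2$; (ii) a loop invariant $A_k f(x_k) \le \psi_k^*$; and (iii) a lower bound on the growth of $A_k$. Parts (i) and (ii) together give $f(x_N) - f(x^*) \le \norm{x_0-x^*}_\bB^2 / (2A_N)$, and part (iii) will show $A_N = \tilde\Omega\!\pa{N^5 / (L_3 \norm{x_0-x^*}_\bB^2)}$; substituting and solving for $N$ yields the stated $O\!\pa{\pa{L_3\norm{x_0-x^*}_\bB^4/\eps}^{1/5}}$.

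Part (i) is immediate from convexity: each affine map $f(x_{k+1}) + \inner{\grad f(x_{k+1}), x - x_{k+1}}$ appended to $\psi_k$ in $\fastq$ is a global minorant of $f$, so $\psi_k(x) \le \tfrac12\norm{x-x_0}_\bB^2 + A_k f(x)$ for all $x$ by induction; take $x = x^*$. For part (ii) I would induct on $k$, the base case being trivial since $A_0 = 0$. The ingredients are: $\psi_k$ is a quadratic that is $1$-strongly convex w.r.t. $\norm{\cdot}_\bB$, so $\psi_k(x) = \psi_k^* + \tfrac12\norm{x-v_k}_\bB^2$ with $v_k = \argmin_x \psi_k(x)$; the momentum choices force $\tfrac{A_k x_k + a_{k+1} v_k}{A_{k+1}} = y_k$; and convexity gives $f(x_k) \ge f(x_{k+1}) + \inner{\grad f(x_{k+1}), x_k - x_{k+1}}$. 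Chaining these reduces the desired $A_{k+1} f(x_{k+1}) \le \psi_{k+1}^*$ to the scalar inequality $A_{k+1}\inner{\grad f(x_{k+1}), y_k - x_{k+1}} \ge \tfrac{a_{k+1}^2}{2}\norm{\grad f(x_{k+1})}_{\bB^{-1}}^2$. This is where order $3$ $L_3$-smoothness (cf. eq.~\eqref{eq:highersmooth} and Lemma~\ref{lem:lipschitz}) enters: since $x_{k+1} = \argmin_x \Omega_{y_k,3,\bB}(x)$ (eq.~\eqref{eq:regmodel}) satisfies $\grad \Omega_{y_k,3,\bB}(x_{k+1}) = 0$, the standard third-order Taylor remainder bounds yield $\norm{\grad f(x_{k+1})}_{\bB^{-1}} \lesssim L_3 \norm{x_{k+1}-y_k}_\bB^3$ and $\inner{\grad f(x_{k+1}), y_k - x_{k+1}} \gtrsim L_3\norm{x_{k+1}-y_k}_\bB^4$, so the scalar inequality holds precisely when $a_{k+1}^2 L_3 \rho_k = A_{k+1}$ and $\rho_k \approx \norm{x_{k+1}-y_k}_\bB^2$ --- exactly the relations enforced by the inner search of $\fastq$.

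Part (iii) is the main obstacle. From $a_{k+1}^2 L_3 \rho_k = A_{k+1}$ and $a_{k+1} = A_{k+1} - A_k$ one obtains $\sqrt{A_{k+1}} - \sqrt{A_k} \ge \tfrac12(L_3\rho_k)^{-1/2}$, hence $\sqrt{A_N} \gtrsim \sum_{k<N}(L_3\rho_k)^{-1/2}$; but by itself this gives only the too-weak $A_N \gtrsim N^2/(L_3\sup_k\rho_k)$. Obtaining the $N^5$ growth is the crux of near-optimal higher-order acceleration: one must argue that the $\rho_k$ cannot all be large by controlling the accumulated step progress. Concretely, I would telescope the strongly-convex estimate sequence to show that the iterates and the points $v_k$ stay within $O(\norm{x_0-x^*}_\bB)$ of $x^*$ and that a suitable aggregate $\sum_{k<N}\rho_k^{q}$ is bounded in terms of $\norm{x_0-x^*}_\bB^2$, then apply H\"older's inequality to $\sum_{k<N}(L_3\rho_k)^{-1/2}$ to conclude $A_N = \tilde\Omega\pa{N^5/(L_3\norm{x_0-x^*}_\bB^2)}$; the $\tilde\Omega$ absorbs the $\log$ factors arising because the inner search achieves $\rho_k \approx \norm{x_{k+1}-y_k}_\bB^2$ only up to constants and up to the target accuracy.

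Finally, the per-iteration cost: finding an admissible pair $(\rho_k, x_{k+1})$ is a one-dimensional monotone search in $\rho_k$, resolved to the needed precision in $O(\log(\Zcal/\eps))$ probes; each probe solves $\min_x \Omega_{y_k,3,\bB}(x)$, a quartically-regularized third-order model, which the efficient tensor subroutine of \cite{bullins2018fast} solves with $O(\log^{O(1)}(\Zcal/\eps))$ gradient-oracle calls and solves of linear systems formed from $\bB$ and Hessians of $f$; and $v_k = \argmin_x \psi_k(x)$ is one additional linear solve. Tracking the smoothness constant, the initial suboptimality, and $\norm{x_0-x^*}_\bB$ through these subroutines gives the polynomial parameter $\Zcal$ and the claimed $O(\log^{O(1)}(\Zcal/\eps))$ per-iteration complexity.
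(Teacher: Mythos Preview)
The paper does not prove this theorem at all: it is quoted as Theorem~4.1 of \cite{bullins2018fast} and used as a black box (the surrounding text reads ``As established by Bullins~\cite{bullins2018fast}, $\fastq$ provides us with the following guarantee''). There is therefore no in-paper proof to compare your attempt against.

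That said, your sketch is a faithful outline of the Monteiro--Svaiter/near-optimal higher-order acceleration argument that \cite{bullins2018fast} (and \cite{gasnikovdec2018global,jiang2018optimal,bubeck2018near}) actually carry out. Parts (i) and (ii) are the standard estimate-sequence steps and are essentially correct as written. For part (iii) your diagnosis is right---the $N^5$ growth is the whole point---but the mechanism you describe is slightly imprecise: the quantity that is controlled by telescoping is not a bare $\sum_k \rho_k^q$ but a weighted sum of the form $\sum_k A_{k+1} L_3 \rho_k^{2}$ (equivalently $\sum_k (A_{k+1}/\lambda_{k+1})\norm{x_{k+1}-y_k}_\bB^2$ in the MS notation), which falls out of the \emph{strict} inequality in the loop invariant of part~(ii). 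Combining that bound with $a_{k+1}^2 L_3\rho_k = A_{k+1}$ and a H\"older/power-mean step is indeed how one extracts $A_N \gtrsim N^5/(L_3\norm{x_0-x^*}_\bB^2)$. Note also that the theorem as stated has no $\log$ factors in the outer iteration count, so you should not need a $\tilde\Omega$ there; the logarithmic overhead lives entirely in the per-iteration binary search and tensor subproblem, exactly as you describe in your last paragraph.
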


Given this result, we have the following corollary which will be useful for our smoothed minimization problem.

\begin{corollary}\label{cor:mainlinfty}
Let $f_\apm(x) = \smax_\apm(\bA x-b)$ be the softmax approximation to \eqref{eq:linftyreg} for $\apm = \frac{\eps}{2\log(m)}$, where $\bA$ is such that $\bA^\top \bA \succ 0$. Then, letting $x_\apm^* \defeq \argmin\limits_{x \in \reals^d} f_\apm(x)$, $\fastq$ finds a point $x_N$ such that
\begin{equation*}
f_\apm(x_N) - f_\apm(x_\apm^*) \leq \frac{\eps}{2}
\end{equation*}
in $O\pa{\frac{\log^{3/5}(m)\norm{x_0-x^*}_{\bA^\top \bA}^{4/5}}{\eps^{4/5}}}$ iterations, where each iteration requires $O(\log^{O(1)}(\Zcal/\eps))$ calls to a gradient oracle and solutions to linear systems of the form $\bA^\top \bD_x\bA\phi = w_x$, for diagonal matrix $\bD_x \in \reals^{m \times m}$ and $w_x \in \reals^d$.
\end{corollary}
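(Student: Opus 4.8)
The plan is to combine Theorem~\ref{thm:mainlipschitz} with Theorem~\ref{thm:smoothrate} by instantiating the general higher-order accelerated method with $\bB = \bA^\top\bA$, the order $p=3$, and the specific smoothness constant $L_3 = 15/\apm^3$. The only nontrivial preliminary point is to check that $f_\apm(x) = \smax_\apm(\bA x - b)$ actually satisfies the hypotheses of Theorem~\ref{thm:smoothrate}: it is $4$-times differentiable (hence $3$-times differentiable with a well-defined third derivative), and by Theorem~\ref{thm:mainlipschitz} it is (order $3$) $\frac{15}{\apm^3}$-smooth with respect to $\norm{\cdot}_{\bA^\top\bA}$, which is a valid norm precisely because $\bA^\top\bA \succ 0$ by assumption. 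Thus $\fastq$ applies with $L_3 = 15/\apm^3$ and $\bB = \bA^\top\bA$.

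Next I would substitute $\apm = \frac{\eps}{2\log(m)}$ and the target accuracy $\frac{\eps}{2}$ (in place of the generic $\eps$ of Theorem~\ref{thm:smoothrate}) into the iteration bound $O\pa{\pa{L_3 \norm{x_0-x^*}_\bB^4 / \eps}^{1/5}}$. This gives
\begin{equation*}
O\pa{\pa{\frac{15}{\apm^3}\cdot\frac{\norm{x_0-x^*}_{\bA^\top\bA}^4}{\eps/2}}^{1/5}} = O\pa{\pa{\frac{\log^3(m)}{\eps^3}\cdot\frac{\norm{x_0-x^*}_{\bA^\top\bA}^4}{\eps}}^{1/5}} = O\pa{\frac{\log^{3/5}(m)\,\norm{x_0-x^*}_{\bA^\top\bA}^{4/5}}{\eps^{4/5}}},
\end{equation*}
which is exactly the claimed rate (the constant $15$ and the factor $2$ are absorbed into the $O(\cdot)$). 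One mild point of bookkeeping: Theorem~\ref{thm:smoothrate} is stated in terms of $\norm{x_0 - x^*}_\bB$ where $x^*$ minimizes $f_\apm$, i.e.\ $x^* = x_\apm^*$, whereas the corollary's statement writes $\norm{x_0 - x^*}_{\bA^\top\bA}$; I would note that here $x^*$ refers to the minimizer of the smoothed objective, and this is the quantity controlled by $\fastq$'s analysis.

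Finally I would address the per-iteration cost. Theorem~\ref{thm:smoothrate} guarantees $O(\log^{O(1)}(\Zcal/\eps))$ gradient-oracle calls and linear-system solves per iteration; I must verify that, for this particular $f_\apm$, those linear systems have the stated form $\bA^\top\bD_x\bA\,\phi = w_x$ with $\bD_x$ diagonal. This follows because each subproblem minimizes $\Omega_{y_k, 3, \bA^\top\bA}(\cdot)$, whose quadratic part (the term that determines the linear systems arising in the tensor-method subsolver of \cite{bullins2018fast}) is built from $\hess f_\apm(y_k) = \bA^\top \hess\smax_\apm(\bA y_k - b)\,\bA$ and from the regularizer $\norm{\cdot}_{\bA^\top\bA}^{4}$; by \eqref{eq:smaxhess}, $\hess\smax_\apm$ is of the form $\frac{1}{\apm}(\diag(g) - gg^\top)$ with $g = \grad\smax_\apm(\bA y_k - b)$, and the rank-one piece $gg^\top$ together with the regularizer's rank-one curvature can be handled by the Sherman--Morrison formula, leaving linear systems governed by $\bA^\top\bD_x\bA$ for a diagonal $\bD_x$ (here $\bD_x$ combines $\frac{1}{\apm}\diag(g)$ with a scalar multiple of $\bI$ coming from $\norm{\bA(\cdot)}_2^2$ factors). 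The polynomial $\Zcal$ collects the relevant quantities (e.g.\ $\norm{\bA^\top\bA}$, $1/\apm = 2\log(m)/\eps$, diameter bounds). The main obstacle, such as it is, is not a deep one: it is simply tracking the precise structure of the linear systems inside the $\fastq$ subsolver to confirm the $\bA^\top\bD_x\bA$ form and checking that the substitution of $\apm$ and $\eps/2$ yields the advertised exponents; the convergence rate itself is an immediate consequence of the two cited theorems.
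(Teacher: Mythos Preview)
Your proposal is correct and follows essentially the same route as the paper: invoke Theorem~\ref{thm:mainlipschitz} to get $L_3 = 15/\apm^3$ smoothness with respect to $\norm{\cdot}_{\bA^\top\bA}$, plug $\apm = \eps/(2\log m)$ and target accuracy $\eps/2$ into Theorem~\ref{thm:smoothrate} to obtain the iteration bound, and then use the $\diag(g)-gg^\top$ structure of $\hess\smax_\apm$ together with Sherman--Morrison to reduce the per-iteration linear systems to the form $\bA^\top\bD_x\bA\,\phi = w_x$. The paper's proof spells out the specific quantities $c_t$ and $g(\lambda)$ arising inside $\fastq$'s subsolver a bit more explicitly, but the argument is the same.
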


We are now equipped with the tools necessary for proving Theorem \ref{thm:mainlinfty}.
\begin{proof}[Proof of Theorem \ref{thm:mainlinfty}] The proof follows simply by combining Fact \ref{fact:smaxapprox}, for $\apm = \frac{\eps}{2\log(m)}$, with Corollary \ref{cor:mainlinfty}.
\end{proof}

\section{Soft-margin SVM}
In this section we shift our focus to consider various instances of soft-margin SVM.
It is known that in the $\ell_2$ case, an improved rate of $O(1/\eps^{1/2})$ is
possible \cite{orabona2012prisma, nesterov2013gradient, allen2016optimal}, and so we give the first sub-$O(1/\eps)$ rate for variants of
SVM that are both non-smooth \emph{and} non-strongly convex. In Section \ref{sec:l1svm},
we handle $\ell_1$ regularization, and in Section \ref{sec:l4svm}, we consider the case of higher-order regularizers.

\subsection{$\ell_1$-regularized SVM}\label{sec:l1svm}
We begin with $\ell_1$-regularized soft-margin SVM ($\ell_1$-SVM), i.e.,
\begin{equation}\label{eq:l1svmfunc}
f(x) = \lambda\norm{x}_1 + \frac{1}{m}\sum\limits_{i=1}^m \max\braces{0, 1-b_i\inner{a_i, x}},
\end{equation}
for $a_i \in \reals^d$, $b_i \in \reals$ ($i\in [m]$), and $\lambda > 0$. To simplify the notation, we define $\svm(x)~\defeq~\frac{1}{m}\sum\limits_{i=1}^m \max\braces{0, 1-x_i}$. Letting $\tq_i \defeq b_ia_i$ and 
\begin{equation}\label{eq:tqdef}
\tQ \defeq \bra{\tq_1\ \tq_2\ \dots\  \tq_m}^\top,
\end{equation}
 we may then rewrite $f(x) = \lambda\norm{x}_1 + \svm(\tQ x)$. We now make the following observations concerning softmax-based approximations for $\norm{\cdot}_1$ and $\max\braces{0, \cdot}$.

\begin{lemma}[$\ell_1$ approximation]\label{lem:l1approx} Let $\sabs_\apm(\scalar) \defeq \smax_\apm([\scalar,-\scalar])$ for $\scalar \in \reals$, and let $\softlone_\apm(x) \defeq \sum\limits_{i=1}^m \sabs_\apm(x_i)$ for $x \in \reals^m$. Then, we have that
\begin{equation}
\norm{x}_1 \leq \softlone_\apm(x) \leq \norm{x}_1 + \apm m.
\end{equation}
\end{lemma}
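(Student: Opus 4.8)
The plan is to reduce the statement about $\softlone_\apm$ to a coordinate-wise statement about $\sabs_\apm$, and then reduce that to Fact~\ref{fact:smaxapprox} applied to the two-element vector $[\scalar, -\scalar] \in \reals^2$. Since $\softlone_\apm(x) = \sum_{i=1}^m \sabs_\apm(x_i)$ and $\norm{x}_1 = \sum_{i=1}^m |x_i|$, it suffices to establish, for every $\scalar \in \reals$,
\begin{equation*}
|\scalar| \leq \sabs_\apm(\scalar) \leq |\scalar| + \apm\log(2),
\end{equation*}
and then sum over the $m$ coordinates; the upper bound becomes $\norm{x}_1 + \apm m\log(2) \leq \norm{x}_1 + \apm m$, which is the claimed inequality (with a slightly better constant, so nothing is lost).

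\textbf{First I would} invoke Fact~\ref{fact:smaxapprox} with the ambient dimension taken to be $2$ and the vector $[\scalar, -\scalar]$. Its guarantee reads
\begin{equation*}
\max\{\scalar, -\scalar\} \leq \smax_\apm([\scalar,-\scalar]) \leq \apm\log(2) + \max\{\scalar, -\scalar\}.
\end{equation*}
Now observe $\max\{\scalar, -\scalar\} = |\scalar|$ and, by the definition in the lemma statement, $\smax_\apm([\scalar, -\scalar]) = \sabs_\apm(\scalar)$. This yields the coordinate-wise bound directly.

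\textbf{Then I would} assemble the pieces: summing the lower bound $|x_i| \leq \sabs_\apm(x_i)$ over $i \in [m]$ gives $\norm{x}_1 \leq \softlone_\apm(x)$, and summing the upper bound $\sabs_\apm(x_i) \leq |x_i| + \apm\log(2)$ over $i$ gives $\softlone_\apm(x) \leq \norm{x}_1 + \apm m \log(2) \leq \norm{x}_1 + \apm m$ since $\log(2) < 1$. This completes the argument.

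\textbf{I do not expect any real obstacle here}; the only thing to be careful about is that Fact~\ref{fact:smaxapprox} is stated for $x \in \reals^m$ with a fixed $m$, so one should note explicitly that it applies verbatim to any finite dimension (here $2$), and that the $\log(m)$ factor in the fact instantiates to $\log(2)$ rather than $\log(m)$ — it is the dimension of the argument of $\smax_\apm$, namely $[\scalar,-\scalar]$, that matters, not the dimension $m$ of the original vector $x$.
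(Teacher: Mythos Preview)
Your proposal is correct and follows essentially the same route as the paper's proof: apply Fact~\ref{fact:smaxapprox} to the two-element vector $[\scalar,-\scalar]$ to get $|\scalar|\le\sabs_\apm(\scalar)\le|\scalar|+\apm\log(2)$, then sum over coordinates. In fact you are slightly more careful than the paper, which writes the per-coordinate upper bound as $|\scalar|+\apm$ directly without making the $\log(2)<1$ step explicit.
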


\begin{lemma}[Smooth hinge loss approximation]\label{lem:hingeapprox} Let $\shinge_\apm(\scalar) \defeq \smax_\apm([0, \scalar])$ for $\scalar \in \reals$. Then
\begin{equation}
\max\braces{0,\scalar} \leq \shinge_\apm(\scalar) \leq \max\braces{0,\scalar}+\apm.
\end{equation}
\end{lemma}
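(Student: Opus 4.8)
The plan is to obtain this as an immediate consequence of Fact~\ref{fact:smaxapprox}. By definition $\shinge_\apm(\scalar) = \smax_\apm([0,\scalar])$ is nothing more than $\smax_\apm$ evaluated at the two-coordinate vector $[0,\scalar] \in \reals^2$, so I would apply Fact~\ref{fact:smaxapprox} with $m = 2$ and the input vector $x = [0,\scalar]$. Since $\max_{j\in\bra{2}}[0,\scalar]_j = \max\braces{0,\scalar}$, this directly gives
\begin{equation*}
\max\braces{0,\scalar} \leq \smax_\apm([0,\scalar]) \leq \apm\log(2) + \max\braces{0,\scalar}.
\end{equation*}
The left-hand inequality is exactly the claimed lower bound. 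For the upper bound, I would use the elementary estimate $\log 2 \leq 1$ to replace $\apm\log(2)$ by $\apm$, which yields $\shinge_\apm(\scalar) \leq \max\braces{0,\scalar} + \apm$.

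Alternatively, if a self-contained derivation is preferred, I would unfold the definition to $\shinge_\apm(\scalar) = \apm\log\pa{1 + e^{\scalar/\apm}}$ and sandwich the argument of the logarithm: the chain $e^{\max\braces{0,\scalar}/\apm} = \max\braces{1, e^{\scalar/\apm}} \leq 1 + e^{\scalar/\apm} \leq 2\,\max\braces{1, e^{\scalar/\apm}} = 2\,e^{\max\braces{0,\scalar}/\apm}$, followed by applying $\apm\log(\cdot)$ to each term and again invoking $\log 2 \leq 1$, delivers both inequalities at once.

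There is no real obstacle here; the only point requiring any care is tracking the additive constant, namely noting that the generic $\apm\log m$ slack in Fact~\ref{fact:smaxapprox} specializes to $\apm\log 2 \leq \apm$ when $m = 2$, which is what lets the stated bound hold with constant $\apm$ rather than $\apm\log 2$. (The same reasoning, applied coordinatewise with the observation that $[\scalar,-\scalar] \in \reals^2$ so $\max\braces{\scalar,-\scalar} = \abs{\scalar}$, underlies the companion Lemma~\ref{lem:l1approx} after summing the $m$ per-coordinate bounds.)
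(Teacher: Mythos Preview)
Your proposal is correct and matches the paper's own proof, which simply states that the lemma follows immediately from Fact~\ref{fact:smaxapprox}. The only detail you make explicit that the paper leaves implicit is the use of $\log 2 \leq 1$ to pass from the $\apm\log 2$ slack to the stated $\apm$ slack.
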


This gives us a natural smooth approximation to $\svm(x)$, namely,
\begin{equation}
\ssvm_\apm(x) \defeq \frac{1}{m}\sum\limits_{i=1}^m \shinge_\apm(1 - x_i).
\end{equation}
Taken together with these approximations, we arrive at the following lemma, the proof of which follows by combining Lemmas \ref{lem:l1approx} and \ref{lem:hingeapprox}.
\begin{lemma}\label{lem:l1ssvmapprox}
Let $f_\apm(x) = \lambda\softlone_\apm(x) + \ssvm(\tQ x)$, and let $f(x)$ be as in \eqref{eq:l1svmfunc}. Then, for all $x \in \reals^d$,
\begin{equation}
f(x) \leq f_\apm(x) \leq f(x) + 2\apm \lambda d.
\end{equation}
\end{lemma}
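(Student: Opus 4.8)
The plan is to split $f$ into its $\ell_1$ term and its hinge-loss term, apply the two preceding approximation lemmas to each piece, and then add the resulting two-sided inequalities. First I would invoke Lemma~\ref{lem:l1approx}, but with the ambient dimension equal to $d$ rather than $m$ — the penalty $\norm{\cdot}_1$ acts on $x\in\reals^d$, so $\softlone_\apm(x)=\sum_{i=1}^d\sabs_\apm(x_i)$ — which gives $\norm{x}_1\le\softlone_\apm(x)\le\norm{x}_1+\apm d$; multiplying through by $\lambda>0$ yields $\lambda\norm{x}_1\le\lambda\softlone_\apm(x)\le\lambda\norm{x}_1+\apm\lambda d$.

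Next I would handle the hinge term coordinatewise. Since $(\tQ x)_i=\inner{\tq_i,x}=b_i\inner{a_i,x}$, applying Lemma~\ref{lem:hingeapprox} with scalar argument $\scalar=1-(\tQ x)_i$ gives, for every $i\in[m]$,
\[
\max\braces{0,\,1-b_i\inner{a_i,x}}\ \le\ \shinge_\apm\!\pa{1-(\tQ x)_i}\ \le\ \max\braces{0,\,1-b_i\inner{a_i,x}}+\apm.
\]
Averaging these $m$ inequalities (i.e.\ dividing the sum by $m$) collapses the aggregate additive error to $\frac{1}{m}\sum_{i=1}^m\apm=\apm$, so $\svm(\tQ x)\le\ssvm_\apm(\tQ x)\le\svm(\tQ x)+\apm$, where here $\ssvm_\apm$ is the smooth SVM approximation defined just above.

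Finally I would add the two inequality chains. The lower bounds give $f(x)=\lambda\norm{x}_1+\svm(\tQ x)\le\lambda\softlone_\apm(x)+\ssvm_\apm(\tQ x)=f_\apm(x)$, and the upper bounds give $f_\apm(x)\le f(x)+\apm\lambda d+\apm$. The stated bound $f_\apm(x)\le f(x)+2\apm\lambda d$ then follows by absorbing the lower-order term, using $\apm\le\apm\lambda d$ in the relevant regime $\lambda d\ge 1$.

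I do not expect a genuine obstacle here: the lemma is an immediate consequence of the two sandwich estimates. The only points requiring care are bookkeeping ones — using dimension $d$ (not $m$) when invoking Lemma~\ref{lem:l1approx}, tracking the $1/m$ normalization in $\svm$ and $\ssvm_\apm$ so the total hinge error stays $\apm$ rather than $\apm m$, and the mild simplification $\apm\lambda d+\apm\le 2\apm\lambda d$.
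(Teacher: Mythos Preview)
Your proposal is correct and matches the paper's approach exactly: the paper does not even give a separate proof, stating only that the lemma ``follows by combining Lemmas~\ref{lem:l1approx} and~\ref{lem:hingeapprox},'' which is precisely the decomposition you carry out. Your observations about using dimension $d$ in Lemma~\ref{lem:l1approx}, about the $1/m$ averaging keeping the hinge error at $\apm$, and about the mild assumption $\lambda d\ge 1$ needed for the clean $2\apm\lambda d$ bound are all accurate bookkeeping remarks that the paper leaves implicit.
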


As was the case for $\linfty$ regression, in order to make use of the guarantees provided by $\fastq$, we must first show higher-order smoothness, and so we have following theorem.
\begin{theorem}\label{thm:mainlipschitzsvm}
Let $f_\apm(x) = \lambda\softlone_\apm(x) + \ssvm_\apm(\tQ x)$. Then, $f_\apm(x)$ is (order 3) $L_3$-smooth w.r.t. $\norm{\cdot}_2$, for $L_3 = \frac{15\pa{\lambda d + \norm{\tQ^\top \tQ}^2}}{\apm^3}$.
\end{theorem}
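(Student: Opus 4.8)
The plan is to bound $\abs{\fourth f_\apm(x)[h,h,h,h]}$ by $\frac{15(\lambda d + \norm{\tQ^\top\tQ}^2)}{\apm^3}\norm{h}_2^4$ and then invoke Lemma \ref{lem:lipschitz} (with $\bA = \bI$, so $\bA^\top\bA = \bI \succ 0$) to pass from a fourth-derivative bound to Lipschitz continuity of the third derivative. Since $f_\apm$ is a sum of two pieces, $\lambda\softlone_\apm(x)$ and $\ssvm_\apm(\tQ x)$, the plan is to control the fourth derivative of each summand separately and then add. I will use the fact (already available as Lemma \ref{lem:smaxfourthbound}) that for any $m'$ and any $x,h \in \reals^{m'}$, $\abs{\fourth\smax_\apm(x)[h,h,h,h]} \leq \frac{15}{\apm^3}\norm{h}_2^4$; the two helper functions $\sabs_\apm$ and $\shinge_\apm$ are simply $\smax_\apm$ composed with affine maps $\reals \to \reals^2$, so their single-variable fourth derivatives are bounded via the chain rule by $\frac{15}{\apm^3}$ times the fourth power of the norm of the relevant linear map.

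For the $\ell_1$ term: $\softlone_\apm(x) = \sum_{i=1}^m \sabs_\apm(x_i)$ is separable, so $\fourth\softlone_\apm(x)[h,h,h,h] = \sum_{i=1}^m \sabs_\apm^{(4)}(x_i)\, h_i^4$. Each $\sabs_\apm(c) = \smax_\apm([c,-c])$, so writing the affine map $c \mapsto [c,-c] = Mc$ with $\norm{Mh_i}_2^2 = 2h_i^2$, Lemma \ref{lem:smaxfourthbound} gives $\abs{\sabs_\apm^{(4)}(c)} \leq \frac{15}{\apm^3}(2)^2 = \frac{60}{\apm^3}$; hence $\abs{\fourth\softlone_\apm(x)[h,h,h,h]} \leq \frac{60}{\apm^3}\sum_i h_i^4 \leq \frac{60}{\apm^3}\norm{h}_2^4$. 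I need to double-check whether the paper's intended constant is $\frac{15 d}{\apm^3}$ rather than $\frac{60}{\apm^3}$ — it is plausible the intended bound uses $\shinge$-style scaling (where the map $c\mapsto[0,c]$ has $\norm{Mh_i}_2^2 = h_i^2$, giving exactly $\frac{15}{\apm^3}$ per coordinate) together with $\sum_i h_i^4 \le (\sum_i h_i^2)^2$ crudely bounded using $d$; in writing the proof I would track constants carefully and fold any slack into the stated $L_3$, so the clean claim $\le \frac{15\lambda d}{\apm^3}\norm{h}_2^4$ for the $\lambda\softlone_\apm$ contribution goes through (possibly after absorbing the factor $2$). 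For the hinge term: $\ssvm_\apm(\tQ x) = \frac{1}{m}\sum_{i=1}^m \shinge_\apm(1 - (\tQ x)_i)$, and by the chain rule through $x \mapsto \tQ x \mapsto 1 - (\tQ x)_i$,
\begin{equation*}
\fourth\ssvm_\apm(\tQ x)[h,h,h,h] = \frac{1}{m}\sum_{i=1}^m \shinge_\apm^{(4)}\big(1-(\tQ x)_i\big)\,\big(\inner{\tq_i, h}\big)^4,
\end{equation*}
so $\abs{\fourth\ssvm_\apm(\tQ x)[h,h,h,h]} \leq \frac{15}{\apm^3}\cdot\frac{1}{m}\sum_{i=1}^m \inner{\tq_i,h}^4$. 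The remaining step is to bound $\frac{1}{m}\sum_i \inner{\tq_i,h}^4$ by $\norm{\tQ^\top\tQ}^2\norm{h}_2^4$: since $\sum_i \inner{\tq_i,h}^2 = \norm{\tQ h}_2^2 \leq \norm{\tQ^\top\tQ}\norm{h}_2^2$, we get $\sum_i \inner{\tq_i,h}^4 \leq \big(\max_i \inner{\tq_i,h}^2\big)\sum_i \inner{\tq_i,h}^2 \leq \big(\sum_i\inner{\tq_i,h}^2\big)^2 \leq \norm{\tQ^\top\tQ}^2\norm{h}_2^4$, and the $\frac{1}{m}$ only helps.

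Combining the two contributions gives $\abs{\fourth f_\apm(x)[h,h,h,h]} \leq \frac{15(\lambda d + \norm{\tQ^\top\tQ}^2)}{\apm^3}\norm{h}_2^4$, and then Lemma \ref{lem:lipschitz} with $\bA = \bI$ yields that $\third f_\apm$ is $L_3$-Lipschitz w.r.t. $\norm{\cdot}_2$ with $L_3 = \frac{15(\lambda d + \norm{\tQ^\top\tQ}^2)}{\apm^3}$, which is the claimed (order $3$) smoothness. The main obstacle — really the only non-mechanical point — is getting the constants and the dimension factor $d$ exactly right in the $\ell_1$ term: one must be careful that the separable sum $\sum_i h_i^4$ is controlled by $\norm{h}_2^4$ with no extra $d$, while the $d$ in the final $L_3$ comes instead from the approximation bookkeeping (cf. Lemma \ref{lem:l1ssvmapprox}) rather than from the smoothness estimate, so I would make sure the two are not conflated and that the chain-rule Jacobian factors for $\sabs_\apm$ and $\shinge_\apm$ are accounted for. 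Everything else is a routine application of the chain rule plus Lemma \ref{lem:smaxfourthbound} and Cauchy–Schwarz.
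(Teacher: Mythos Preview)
Your approach is correct and matches the paper's: bound $\abs{\fourth f_\apm(x)[h]^4}$ separately for the two summands and then invoke Lemma~\ref{lem:lipschitz} with $\bA = \bI$. Your one point of confusion --- where the $d$ in $L_3$ originates --- is easily resolved: it \emph{does} come from the smoothness estimate, not from Lemma~\ref{lem:l1ssvmapprox}. The paper bounds each summand of $\softlone_\apm$ crudely by $\abs{\sabs_\apm^{(4)}(x_i)}\,h_i^4 \leq \frac{15}{\apm^3}\norm{h}_2^4$ (glossing over the Jacobian factor of $4$ you correctly flagged) and then sums $d$ such terms to obtain $\frac{15d}{\apm^3}\norm{h}_2^4$. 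Your sharper estimate $\sum_i h_i^4 \leq \norm{h}_2^4$ yields $\frac{60}{\apm^3}\norm{h}_2^4$ with no $d$, which is stronger than the stated constant whenever $d \geq 4$; so your argument in fact proves the theorem with room to spare, and the $d$ in the stated $L_3$ is simply an artifact of the paper's looser summation.
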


\begin{corollary}\label{cor:mainl1svm}
Let $f_\apm(x) = \lambda\softlone_\apm(x) + \ssvm_\apm(\tQ x)$ be the smooth approximation to $f(x)$ (as in \eqref{eq:l1svmfunc}) with $\apm = \frac{\eps}{4\lambda d}$ for $\eps > 0$. Then, letting $x_\apm^* \defeq \argmin\limits_{x \in \reals^d} f_\apm(x)$, $\fastq$ finds a point $x_N$ such that
\begin{equation*}
f_\apm(x_N) - f_\apm(x_\apm^*) \leq \frac{\eps}{2}
\end{equation*}
in $O\pa{\frac{\pa{\lambda d}^{3/5}(\lambda d + \norm{\tQ^\top \tQ}^2)^{1/5}\norm{x_0-x^*}^{4/5}}{\eps^{4/5}}}$ iterations, where each iteration requires $O(\log^{O(1)}(\Zcal/\eps))$ calls to a gradient oracle and linear system solver, and where $\Zcal$ is a polynomial in various problem-dependent parameters.
\end{corollary}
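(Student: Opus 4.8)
The plan is to run $\fastq$ directly on the smoothed objective $f_\apm(x) = \lambda\softlone_\apm(x) + \ssvm_\apm(\tQ x)$ with Euclidean regularization ($\bB = \bI$), and then push the chosen smoothing parameter $\apm = \frac{\eps}{4\lambda d}$ through the generic rate. First I would apply Theorem \ref{thm:mainlipschitzsvm}, which tells us that $f_\apm$ is (order $3$) $L_3$-smooth with respect to $\norm{\cdot}_2$ for $L_3 = \frac{15(\lambda d + \norm{\tQ^\top\tQ}^2)}{\apm^3}$. This is exactly the hypothesis of Theorem \ref{thm:smoothrate}, so invoking that result on $f_\apm$ with target accuracy $\eps/2$ yields a point $x_N$ with $f_\apm(x_N) - f_\apm(x_\apm^*) \le \eps/2$ after $O\pa{\pa{L_3\norm{x_0-x_\apm^*}^4/\eps}^{1/5}}$ iterations, each of which costs $O(\log^{O(1)}(\Zcal/\eps))$ gradient evaluations and linear-system solves.

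Next I would substitute $\apm = \frac{\eps}{4\lambda d}$, so that $L_3 = \frac{960\,(\lambda d)^3(\lambda d + \norm{\tQ^\top\tQ}^2)}{\eps^3}$; plugging this into the iteration bound and extracting the fifth root gives
\[
O\pa{\pa{\frac{L_3\norm{x_0-x_\apm^*}^4}{\eps}}^{1/5}} = O\pa{\frac{(\lambda d)^{3/5}\,(\lambda d + \norm{\tQ^\top\tQ}^2)^{1/5}\,\norm{x_0-x_\apm^*}^{4/5}}{\eps^{4/5}}},
\]
which matches the claimed rate up to the replacement of $\norm{x_0-x_\apm^*}$ by $\norm{x_0-x^*}$. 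For the per-iteration statement I would note that $f_\apm$ is an explicit composition of softmaxes with linear maps, so all of its derivatives are available in closed form from the softmax calculus; moreover $\grad f_\apm$ and $\hess f_\apm$ decompose into a coordinatewise $\lambda\,\sabs_\apm$ term (whose Hessian contribution is diagonal) and a $\tQ^\top(\cdot)\tQ$ term from $\ssvm_\apm(\tQ x)$ (whose Hessian contribution, by \eqref{eq:smaxhess}, is $\tQ^\top$ times a diagonal-minus-rank-one matrix times $\tQ$). Consequently the tensor-method subroutine of \cite{bullins2018fast} used to compute $\argmin_x \Omega_{y_k,3,\bB}(x)$ reduces to solving a small number of linear systems of the form $(\lambda\bD_1 + \tQ^\top\bD_2\tQ)\phi = w$ with diagonal $\bD_1, \bD_2$, which is of the advertised type.

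The one genuinely delicate step is relating the diameter $\norm{x_0-x_\apm^*}$ to $\norm{x_0-x^*}$, since $\ell_1$-SVM is neither smooth nor strongly convex, so $x_\apm^*$ need not be close to $x^*$. Here I would use Lemma \ref{lem:l1ssvmapprox}: any minimizer $x_\apm^*$ of $f_\apm$ satisfies $f(x_\apm^*) \le f_\apm(x_\apm^*) \le f_\apm(x^*) \le f(x^*) + 2\apm\lambda d = f(x^*) + \eps/2$, so $x_\apm^*$ lies in a sublevel set of the coercive function $f$ just above its minimum; since $\lambda\norm{\cdot}_1 \le f$ this sublevel set is bounded, and I would either (i) bound its radius in terms of $\norm{x_0-x^*}$ (up to lower-order terms) to finish the corollary as stated, or (ii) --- as in the analogous $\linfty$ argument --- simply run $\fastq$ restricted to a ball of radius $\Theta(\norm{x_0-x^*})$ about $x_0$, which still contains $x^*$ and hence certifies $f_\apm(x_N) \le f(x^*) + \eps$, feeding into the proof of Theorem \ref{thm:mainl1svm}. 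This sublevel-set bookkeeping is the main obstacle; the rest is constant-chasing on top of Theorems \ref{thm:mainlipschitzsvm} and \ref{thm:smoothrate} and Lemma \ref{lem:l1ssvmapprox}.
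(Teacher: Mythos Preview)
Your approach is exactly the paper's: the paper's entire proof is the single sentence ``The corollary follows by Theorem~\ref{thm:smoothrate}, using the smoothness guarantee from Theorem~\ref{thm:mainlipschitzsvm},'' i.e., plug $L_3 = \frac{15(\lambda d + \norm{\tQ^\top\tQ}^2)}{\apm^3}$ with $\apm = \frac{\eps}{4\lambda d}$ into the generic $\fastq$ rate. You are actually more careful than the paper in two respects: (i) you spell out the per-iteration linear-system structure, which the paper does not do for the SVM case, and (ii) you correctly flag that Theorem~\ref{thm:smoothrate} literally yields $\norm{x_0 - x_\apm^*}$ rather than $\norm{x_0 - x^*}$ in the bound, whereas the paper silently writes $\norm{x_0 - x^*}$ without comment. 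So there is no gap in your plan relative to the paper; if anything, the ``delicate step'' you identify is one the paper simply elides.
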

\begin{proof}
The corollary follows by Theorem \ref{thm:smoothrate}, using the smoothness guarantee from Theorem \ref{thm:mainlipschitzsvm}.
\end{proof}
\noindent The proof of Theorem \ref{thm:mainl1svm} then follows by combining Lemma \ref{lem:l1ssvmapprox}, for $\apm = \frac{\eps}{4\lambda d}$, with Corollary \ref{cor:mainl1svm}.

\subsection{Higher-order regularization}\label{sec:l4svm}
The soft-margin SVM model has been studied with various choices of regularization beyond $\ell_1$ and $\ell_2$ \cite{bradley1998feature}. Just as introducing strong convexity can lead to faster convergence for $\ell_2$-SVM \cite{allen2016optimal}, we may see that a similar advantage may be obtained for an appropriately chosen regularizer that is uniformly convex. More concretely, if we consider the $\ell_p$-regularized soft-margin SVM for $p = 4$, we are able to use the following theorem from \cite{bullins2018fast} which holds for functions that are both higher-order smooth and uniformly convex.

\begin{theorem}[\cite{bullins2018fast}, Theorem 4.2] \label{thm:kappa} Suppose $f(x)$ is (order 3) $L_3$-smooth and (order 4) $\sigma_4$-uniformly convex w.r.t. $\norm{\cdot}_\bB$, let $\kappa_4 \defeq \frac{L_3}{\sigma_4}$, and let $x^* \defeq \argmin\limits_{x \in \reals^d} f(x)$. Then, with the appropriate restarting procedure, $\fastq$ finds a point $x_N$ such that
\begin{equation*}
f(x_N) - f(x^*) \leq \eps
\end{equation*}
in $O\pa{\kappa_4^{1/5}\log\pa{\frac{f(x_0)-f(x^*)}{\eps}}}$ iterations, where each iteration requires $O(\log^{O(1)}(\Zcal/\eps))$ calls to a gradient oracle and linear system solver, and where $\Zcal$ is a polynomial in various problem-dependent parameters.
\end{theorem}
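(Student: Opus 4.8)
The plan is to derive Theorem~\ref{thm:kappa} from the plain (non-uniformly-convex) guarantee of Theorem~\ref{thm:smoothrate} by a restarting, or epoch-halving, scheme, using uniform convexity to convert a function-value warm start into a small-diameter warm start. First I would record the elementary consequence of $\sigma_4$-uniform convexity: since $\grad f(x^*) = 0$, substituting $x = x^*$ into the defining inequality yields, for every $z \in \reals^d$,
\begin{equation*}
f(z) - f(x^*) \geq \frac{\sigma_4}{4}\norm{z-x^*}_\bB^4, \qquad\text{equivalently}\qquad \norm{z-x^*}_\bB^4 \leq \frac{4}{\sigma_4}\left(f(z)-f(x^*)\right).
\end{equation*}

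Next I would analyze a single epoch. Suppose the epoch begins at a point $z$ with $f(z)-f(x^*) \leq \delta$. We run $\fastq$ starting from $z$ — reinitializing $A_0 = 0$, $v_0 = z$, and $\psi_0(x) = \tfrac12\norm{x-z}_\bB^2$ — and stop once the optimality gap drops below $\delta/2$. By Theorem~\ref{thm:smoothrate} combined with the display above, the number of iterations required is
\begin{equation*}
O\!\left(\left(\frac{L_3\,\norm{z-x^*}_\bB^4}{\delta/2}\right)^{1/5}\right) \leq O\!\left(\left(\frac{L_3 \cdot (4/\sigma_4)\,\delta}{\delta/2}\right)^{1/5}\right) = O\!\left(\kappa_4^{1/5}\right),
\end{equation*}
which is independent of $\delta$; this cancellation is the crux of the argument. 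Then I would iterate: set $\delta_0 \defeq f(x_0)-f(x^*)$ and repeat the epoch, halving $\delta$ each time, so that after $R \defeq \lceil \log_2(\delta_0/\eps) \rceil$ epochs the gap is at most $\eps$. The total iteration count is $R \cdot O(\kappa_4^{1/5}) = O\!\left(\kappa_4^{1/5}\log\!\left((f(x_0)-f(x^*))/\eps\right)\right)$, and each iteration keeps the same $O(\log^{O(1)}(\Zcal/\eps))$ per-step gradient/linear-system cost inherited from $\fastq$.

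The main obstacle is not the arithmetic but verifying that the $\fastq$ guarantee survives restarting: Theorem~\ref{thm:smoothrate} is stated for a single fixed run, so one must check that resetting the accumulated weights $A_k$, the auxiliary point $v_k$, and the estimate function $\psi_k$ at the top of each epoch exactly re-establishes the invariants used in its proof — in particular that the Monteiro–Svaiter-style binary search for $\rho_k$ and the estimate-sequence bound behave identically on each fresh run. Beyond that, the only remaining points are routine bookkeeping: the edge case $\delta_0 \leq \eps$ (absorbed by the ceiling in $R$), stopping early once some epoch already certifies accuracy $\eps$, and tracking that the definition of $\kappa_4 = L_3/\sigma_4$ is consistent with the norm $\norm{\cdot}_\bB$ used throughout.
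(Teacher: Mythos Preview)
The paper does not actually supply a proof of Theorem~\ref{thm:kappa}: it is quoted verbatim as Theorem~4.2 of \cite{bullins2018fast} and used as a black box. Your restarting/epoch-halving argument is correct and is precisely the standard route by which such uniformly-convex rates are obtained from the base rate of Theorem~\ref{thm:smoothrate}; it is also the argument used in the cited reference, so there is nothing to contrast.
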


\begin{remark}\label{rem:quartic}
While the choice of $p = 4$ may appear arbitrary, we note that the fourth-order regularized Taylor models (eq.\eqref{eq:regmodel}, for $p=4$) permit efficiently computable (approximate) solutions \cite{nesterov2018implementable, bullins2018fast}, and developing efficient tensor methods for subproblems beyond the fourth-order model remains an interesting open problem.
\end{remark}

Thus, we may consider $\ell_4$-SVM, i.e., $f(x) = \lambda \norm{x}_4^4 + \svm(\tQ x)$ (as presented in \cite{bradley1998feature}), along with its smooth counterpart $f_\apm(x) = \lambda\norm{x}_4^4 + \ssvm_\apm(\tQ x)$, which brings us to the following corollary and theorem for $\ell_4$-SVM.
\begin{corollary}\label{cor:mainl4svm}
Let $f_\apm(x) = \lambda\norm{x}_4^4 + \ssvm_\apm(\tQ x)$ be the smooth approximation to $f(x)$ with $\apm = \frac{\eps}{4}$ for $\eps > 0$. Then, letting $x_\apm^* \defeq \argmin\limits_{x \in \reals^d} f_\apm(x)$, and with the appropriate restarting procedure, $\fastq$ finds a point $x_N$ such that
\begin{equation*}
f_\apm(x_N) - f_\apm(x_\apm^*) \leq \frac{\eps}{2}
\end{equation*}
in $O\pa{\frac{(d(\lambda + \norm{\tQ^\top \tQ}^2))^{1/5}}{\eps^{3/5}}\log\pa{\frac{f(x_0)-f(x^*)}{\eps}}}$ iterations, where each iteration requires $O(\log^{O(1)}(\Zcal/\eps))$ calls to a gradient oracle and linear system solver, and where $\Zcal$ is a polynomial in various problem-dependent parameters.
\end{corollary}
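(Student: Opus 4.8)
The plan is to derive Corollary~\ref{cor:mainl4svm} from Theorem~\ref{thm:kappa}: I would check that $f_\apm(x) = \lambda\norm{x}_4^4 + \ssvm_\apm(\tQ x)$ is \emph{simultaneously} (order~3) $L_3$-smooth and (order~4) $\sigma_4$-uniformly convex with respect to $\norm{\cdot}_2$, then substitute $\apm = \eps/4$ and $\kappa_4 = L_3/\sigma_4$ into that theorem's bound, run to target accuracy $\eps/2$. The smoothness half is essentially a repeat of the argument behind Theorem~\ref{thm:mainlipschitzsvm} with $\softlone_\apm$ replaced by $\lambda\norm{\cdot}_4^4$; the genuinely new ingredient is the uniform-convexity estimate, which is exactly what the $\ell_4$ regularizer buys us.

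\textbf{Higher-order smoothness.} For the data-fitting term, since $\shinge_\apm(c) = \smax_\apm([0,c])$, Lemma~\ref{lem:smaxfourthbound} gives $|\shinge_\apm^{(4)}(c)| \le 15/\apm^3$; composing with $x \mapsto \tQ x$ via the chain rule, using $\norm{\tQ h}_2 \le \norm{\tQ^\top\tQ}^{1/2}\norm{h}_2$, and applying Lemma~\ref{lem:lipschitz} shows $\ssvm_\apm(\tQ\cdot)$ contributes $O(\norm{\tQ^\top\tQ}^2/\apm^3)$ to $L_3$, exactly as in Theorem~\ref{thm:mainlipschitzsvm}. For the regularizer, $\norm{x}_4^4 = \sum_i x_i^4$ has a \emph{constant} fourth derivative, with $\fourth(\lambda\norm{x}_4^4)[h,h,h,h] = 24\lambda\sum_i h_i^4 \le 24\lambda\norm{h}_2^4$; Lemma~\ref{lem:lipschitz} with $\bA = \bI$ then gives that $\lambda\norm{x}_4^4$ is (order~3) $24\lambda$-smooth w.r.t.\ $\norm{\cdot}_2$. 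Summing, $f_\apm$ is (order~3) $L_3$-smooth with $L_3 = O\pa{(\lambda + \norm{\tQ^\top\tQ}^2)/\apm^3}$.

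\textbf{Uniform convexity.} Since $\ssvm_\apm(\tQ\cdot)$ is convex (a convex function composed with an affine map), its contribution to the uniform-convexity inequality is nonnegative, so it suffices to show $\lambda\norm{x}_4^4$ is (order~4) $\sigma_4$-uniformly convex w.r.t.\ $\norm{\cdot}_2$. I would argue in three elementary steps: (i) the scalar inequality $b^4 \ge a^4 + 4a^3(b-a) + \tfrac13(b-a)^4$ for all $a,b \in \reals$, checked by substituting $t = b-a$, whereupon the claim becomes $6a^2t^2 + 4at^3 + \tfrac23 t^4 \ge 0$, i.e.\ the nonnegativity of a quadratic in $a$ with vanishing discriminant; (ii) summing coordinatewise, $\norm{y}_4^4 \ge \norm{x}_4^4 + \inner{\grad\norm{x}_4^4, y-x} + \tfrac13\norm{y-x}_4^4$; (iii) the norm comparison $\norm{z}_4^4 \ge d^{-1}\norm{z}_2^4$ (Cauchy--Schwarz / power mean) converts the $\ell_4$ bound to an $\ell_2$ one. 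These combine to show $f_\apm$ is (order~4) $\sigma_4$-uniformly convex w.r.t.\ $\norm{\cdot}_2$ with $\sigma_4 = \Theta(\lambda/d)$.

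\textbf{Assembling the bound and the main point to watch.} We then have $\kappa_4 = L_3/\sigma_4 = O\pa{d(\lambda + \norm{\tQ^\top\tQ}^2)/(\lambda\apm^3)}$, which for $\apm = \eps/4$ gives the stated $O\pa{d(\lambda+\norm{\tQ^\top\tQ}^2)/\eps^3}$ (the $\lambda^{-1}$ being $\le 1$ in the standard $\lambda = \Omega(1)$ regime). Plugging into Theorem~\ref{thm:kappa} with accuracy $\eps/2$ yields the $O\pa{\kappa_4^{1/5}\log((f(x_0)-f(x^*))/\eps)}$ iteration count; here one uses that $f_\apm(x_0) - f_\apm(x_\apm^*) \le f(x_0) - f(x^*) + \apm$ by Lemma~\ref{lem:hingeapprox}, so replacing the $f_\apm$-gap inside the logarithm by the $f$-gap changes it by only a constant, and each iteration is a gradient evaluation plus one approximate minimization of a cubic Taylor model with a quartic regularizer --- and since $\lambda\norm{\cdot}_4^4$ is itself quartic, the subproblem stays of the efficiently solvable form of Remark~\ref{rem:quartic}. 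I do not expect any of these steps to be a true obstacle; the one thing requiring care is the $L_3$--$\sigma_4$ balance in this last paragraph: the $\ell_4$ regularizer adds only an $O(\lambda)$ term to $L_3$ (because its fourth derivative is constant) while injecting $\Omega(\lambda/d)$ uniform convexity, and this trade-off is precisely why the fourth power --- rather than a higher even power, whose third derivative would fail to be globally Lipschitz --- is the right regularizer to pair with $\fastq$.
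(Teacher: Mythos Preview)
Your proposal is correct and follows essentially the same approach as the paper: bound the fourth derivative of each summand to get $L_3 = O((\lambda + \norm{\tQ^\top\tQ}^2)/\apm^3)$, use the (order~4) uniform convexity of $\lambda\norm{x}_4^4$ with constant $\Theta(\lambda/d)$ w.r.t.\ $\norm{\cdot}_2$, and plug $\kappa_4$ into Theorem~\ref{thm:kappa}. The only substantive difference is that the paper outsources the uniform-convexity estimate to an external lemma (Lemma~2.4 of \cite{bullins2018fast}), whereas you supply a clean self-contained proof via the scalar inequality and the $\norm{z}_4^4 \ge d^{-1}\norm{z}_2^4$ comparison; you are also more explicit than the paper about the stray $\lambda^{-1/5}$ factor in $\kappa_4^{1/5}$, which the stated bound silently absorbs.
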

\begin{theorem}\label{thm:mainl4svm}
Let $f(x) = \lambda \norm{x}_4^4 + \frac{1}{m}\sum\limits_{i=1}^m \max\braces{0, 1-b_i\inner{a_i, x}}$ where $a_i \in \reals^d$, $b_i \in \reals\ $ for $i \in \bra{m}$, and let $x^* \defeq \argmin\limits_{x\in\reals^d} f(x)$. There is a method, initialized with $x_0$, that outputs $x_N$ such that
\begin{equation*}
f(x_N) - f(x^*) \leq \eps
\end{equation*}
in $O\pa{\frac{(d(\lambda + \norm{\tQ^\top \tQ}^2))^{1/5}}{\eps^{3/5}}\log\pa{\frac{f(x_0)-f(x^*)}{\eps}}}$ iterations, where each iteration requires $O(\log^{O(1)}(\Zcal/\eps))$ calls to a gradient oracle and linear system solver, for some problem-dependent parameter $\Zcal$.
\end{theorem}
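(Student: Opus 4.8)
The plan is to follow the same two-stage template used for Theorems~\ref{thm:mainlinfty} and~\ref{thm:mainl1svm}: replace the non-smooth objective $f(x)=\lambda\norm{x}_4^4+\frac1m\sum_{i=1}^m\max\braces{0,1-b_i\inner{a_i,x}}$ by the surrogate $f_\apm(x)=\lambda\norm{x}_4^4+\ssvm_\apm(\tQ x)$, run $\fastq$ with the appropriate restarting on $f_\apm$, and transfer the guarantee back to $f$ via a uniform additive bound on $f_\apm-f$. Two features distinguish the $\ell_4$ case from the $\ell_1$ case: the regularizer $\lambda\norm{x}_4^4$ is already $C^\infty$ and is therefore \emph{not} smoothed (only the hinge terms are), and it is order-$4$ uniformly convex, which is exactly the extra hypothesis that lets us invoke the logarithmically-convergent guarantee of Theorem~\ref{thm:kappa} rather than the polynomial rate of Theorem~\ref{thm:smoothrate}.

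Granting Corollary~\ref{cor:mainl4svm}, the theorem is immediate. Since only the hinge terms are smoothed, Lemma~\ref{lem:hingeapprox} applied to each $\shinge_\apm(1-(\tQ x)_i)$ and averaged gives $\svm(\tQ x)\le\ssvm_\apm(\tQ x)\le\svm(\tQ x)+\apm$, hence $f(x)\le f_\apm(x)\le f(x)+\apm$ for all $x$; taking $\apm=\eps/4$ as in Corollary~\ref{cor:mainl4svm} makes this gap at most $\eps/4$. Corollary~\ref{cor:mainl4svm} then yields, within the claimed iteration count, a point $x_N$ with $f_\apm(x_N)-f_\apm(x_\apm^*)\le\eps/2$ where $x_\apm^*=\argmin_x f_\apm(x)$. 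Chaining $f(x_N)\le f_\apm(x_N)$, $f_\apm(x_\apm^*)\le f_\apm(x^*)$, and $f_\apm(x^*)\le f(x^*)+\apm$ gives $f(x_N)-f(x^*)\le\pa{f_\apm(x_N)-f_\apm(x_\apm^*)}+\apm\le\eps/2+\eps/4<\eps$. Each $\fastq$ iteration being a gradient evaluation plus a linear-system solve, the oracle cost is inherited from Corollary~\ref{cor:mainl4svm}.

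The actual work sits in Corollary~\ref{cor:mainl4svm}, which I would prove exactly as Corollary~\ref{cor:mainl1svm} was --- by checking the two hypotheses of Theorem~\ref{thm:kappa} for $f_\apm$ w.r.t.\ $\norm{\cdot}_2$. For smoothness, an $\ell_4$-analogue of Theorem~\ref{thm:mainlipschitzsvm}: since $\fourth\pa{\lambda\norm{x}_4^4}[h]^4=24\lambda\norm{h}_4^4\le 24\lambda\norm{h}_2^4$, Lemma~\ref{lem:lipschitz} makes $\third\pa{\lambda\norm{x}_4^4}$ Lipschitz w.r.t.\ $\norm{\cdot}_2$ with constant $O(\lambda)$; the term $\ssvm_\apm(\tQ x)$ is treated as in Theorem~\ref{thm:mainlipschitzsvm} using Lemma~\ref{lem:smaxfourthbound} composed with $\tQ$, contributing $O\pa{\norm{\tQ^\top\tQ}^2/\apm^3}$, so $L_3=O\pa{\lambda+\norm{\tQ^\top\tQ}^2/\apm^3}$. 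For uniform convexity, I would verify the scalar inequality $t^4-s^4-4s^3(t-s)\ge\tfrac13(t-s)^4$ for all $s,t\in\reals$ (minimize the left-hand side over $s$ with $t-s$ fixed), sum it over coordinates to get $\norm{y}_4^4\ge\norm{x}_4^4+\inner{\grad\norm{x}_4^4,y-x}+\tfrac13\norm{y-x}_4^4$, and convert via $\norm{h}_4^4\ge\norm{h}_2^4/d$, so that $\lambda\norm{x}_4^4$ is $\Omega(\lambda/d)$-uniformly convex of order $4$ w.r.t.\ $\norm{\cdot}_2$; adding the convex $\ssvm_\apm(\tQ x)$ preserves this. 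Theorem~\ref{thm:kappa} then applies with $\kappa_4=L_3/\sigma_4=O\pa{d\pa{\lambda+\norm{\tQ^\top\tQ}^2}/(\lambda\apm^3)}$, and substituting $\apm=\eps/4$ and taking a fifth root yields a bound of the stated form.

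I expect the main obstacle to be pinning down the order-$4$ uniform-convexity constant $\sigma_4$ and its dimension dependence --- verifying the sharp scalar inequality, performing the $\norm{\cdot}_4\to\norm{\cdot}_2$ conversion, and confirming that adding the (merely convex) hinge part does not destroy it --- since $\sigma_4$ is the single quantity controlling the condition number $\kappa_4$ and hence whether the final rate comes out as the claimed $\eps^{-3/5}$. By contrast, the smoothness estimate is a routine extension of Theorem~\ref{thm:mainlipschitzsvm}, and the reduction from $f_\apm$ to $f$ is entirely mechanical.
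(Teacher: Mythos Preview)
Your proposal is correct and follows essentially the same route as the paper: the paper's proof of Theorem~\ref{thm:mainl4svm} is the one-line combination of Lemma~\ref{lem:hingeapprox} (with $\apm=\eps/4$) and Corollary~\ref{cor:mainl4svm}, exactly as in your second paragraph, and your sketch of Corollary~\ref{cor:mainl4svm} mirrors the paper's own proof (bound $\abs{\fourth(\lambda\norm{x}_4^4)[h]^4}\le 24\lambda\norm{h}_2^4$, reuse the hinge-part argument from Theorem~\ref{thm:mainlipschitzsvm}, and invoke Theorem~\ref{thm:kappa}). The only substantive difference is that the paper obtains the order-$4$ uniform convexity of $\lambda\norm{x}_4^4$ w.r.t.\ $\norm{\cdot}_2$ by citing an external lemma (Lemma~2.4 of \cite{bullins2018fast}) rather than by your direct scalar-inequality argument; both lead to $\sigma_4=\Theta(\lambda/d)$.
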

\begin{proof}
The proof follows immediately from Lemma \ref{lem:hingeapprox}, for $\apm = \frac{\eps}{4}$, and Corollary \ref{cor:mainl4svm}.
\end{proof}

While we acknowledge that this result is limited to the (less common) case of $\ell_4$-SVM (see Remark \ref{rem:quartic}), we include it here to illustrate the iteration complexity improvement, from $O(1/\eps^{4/5})$ to $\tilde{O}(1/\eps^{3/5})$, under additional uniform convexity guarantees, similar to the improvement gained for strongly convex non-smooth problems \cite{beck2009fast, allen2016optimal}.

\section{Conclusion}
In this work, we have shown how to harness the power of higher-order acceleration for faster non-smooth optimization. While we have focused primarily on convex optimization, one potential direction would be to investigate if these techniques can extend to the non-smooth \emph{non-convex} setting. Although it is not possible in general to guarantee convergence to a first-order critical point (i.e., $\norm{\grad f(x)} \leq \eps$) for non-smooth problems, recent work has consider a relaxed version of the non-convex problem with a Moreau envelope-based smoothing \cite{davis2018complexity}. Improving max flow would be another interesting future direction, perhaps by connecting these higher-order techniques with the results in \cite{sherman2017area, sidford2018coordinate}.

\section*{Acknowledgements}
We thank S\'ebastien Bubeck, Yin Tat Lee, Sushant Sachdeva, Cyril Zhang, and Yi Zhang for numerous helpful conversations. BB is supported by Elad Hazan’s NSF grant CCF-1704860. RP is partially supported by the National Science Foundation under Grant \# 1718533.

\bibliography{main}
\bibliographystyle{abbrv}

\newpage
\appendix
\section{Proofs}
\subsection{Proof of Lemma \ref{lem:smaxfourthbound}}

\begin{proof}[Proof of Lemma \ref{lem:smaxfourthbound}] 
It follows from \eqref{eq:smaxhessh1h2} that, for all $h \in \reals^m$, \begin{align}\label{eq:hesshhineq}
\hess \smax_\apm(x)[h,h] &= \frac{1}{\apm}\pa{\inner{\grad \smax_\apm(x), h^2} - \pa{\inner{\grad \smax_\apm(x), h}}^2}\nonumber\\
&\leq \frac{1}{\apm}\norm{\grad \smax_\apm(x)}_\infty \norm{h}^2 \leq \frac{1}{\apm}\norm{h}^2,
\end{align}
and
\begin{align}
\abs{\hess \smax_\apm(x)[h^2,h]} &\leq \frac{1}{\apm}\pa{\abs{\inner{\grad \smax_\apm(x), h^3}} + \abs{\pa{\inner{\grad \smax_\apm(x), h}}\pa{\inner{\grad \smax_\apm(x), h^2}}}}\nonumber\\ &\leq \frac{1}{\apm}\pa{\norm{\grad \smax_\apm(x)}_\infty \norm{h}_3^3 + \norm{\grad \smax_\apm(x)}_2\norm{\grad \smax_\apm(x)}_\infty\norm{h}_2^3}\nonumber\\
&\leq \frac{1}{\apm}\pa{\norm{\grad \smax_\apm(x)}_\infty + \norm{\grad \smax_\apm(x)}_2\norm{\grad \smax_\apm(x)}_\infty}\norm{h}_2^3\nonumber\\
&\leq \frac{2}{\apm}\norm{h}_2^3,\label{eq:hessh2hineq}
\end{align}
where the second and fourth inequalities follow from H\"older's inequality and \eqref{eq:gradnormineq}, respectively. We may similarly see that 
\begin{equation}\label{eq:hessh3hineq}
\abs{\hess \smax_\apm(x)[h^3,h]} \leq \frac{2}{\apm}\norm{h}_2^4.
\end{equation}

By taking the derivative of $\hess \smax_\apm(x)[h_1, h_2]$ with respect to $x$, for $h_1, h_2 \in \reals^m$, we have that
\begin{multline}\label{eq:thirdh1h2}
\third \smax_\apm(x)[h_1,h_2] = \frac{1}{\apm}(\hess \smax_\apm(x)[h_1\circ h_2] - \inner{\grad \smax_\apm(x), h_1}\hess \smax_\apm(x)[h_2] \\- \inner{\grad \smax_\apm(x), h_2}\hess \smax_\apm(x)[h_1]),
\end{multline}
and so for any $h \in \reals^m$,
\begin{equation}
\third \smax_\apm(x)[h,h] = \frac{1}{\apm}\pa{\hess \smax_\apm(x)[h^2] - 2\inner{\grad \smax_\apm(x), h}\hess \smax_\apm(x)[h]}.
\end{equation}
This implies that
\begin{equation}\label{eq:thirdsmaxhhh}
\third \smax_\apm(x)[h,h,h] = \frac{1}{\apm}\pa{\hess \smax_\apm(x)[h^2, h] - 2\inner{\grad \smax_\apm(x), h}\hess \smax_\apm(x)[h, h]},
\end{equation}
and so we may bound $\abs{\third \smax_\apm(x)[h,h,h]}$ by
\begin{align}
\abs{\third \smax_\apm(x)[h,h,h]} &\leq \frac{1}{\apm}\pa{\abs{\hess \smax_\apm(x)[h^2, h]} + 2\abs{\inner{\grad \smax_\apm(x), h}\hess \smax_\apm(x)[h, h]}}\nonumber\\
&\leq \frac{1}{\apm}\pa{\frac{2}{\apm}\norm{h}_2^3 + \frac{2}{\apm}\norm{h}_2^3} = \frac{4}{\apm^2}\norm{h}_2^3.\label{eq:thirdhhh}
\end{align}
Furthermore, since by \eqref{eq:thirdh1h2} we have that
\begin{align*}
\third \smax_\apm(x)[h^2,h] = \frac{1}{\apm}(\hess &\smax_\apm(x)[h^3] - \inner{\grad \smax_\apm(x), h^2}\hess \smax_\apm(x)[h] \\&- \inner{\grad \smax_\apm(x), h}\hess \smax_\apm(x)[h^2]),
\end{align*}
it follows that
\begin{align}
\bigl|\third \smax_\apm(x)[h^2,h,h]\bigr| &= \frac{1}{\apm}\bigl|\hess \smax_\apm(x)[h^3,h] - \inner{\grad \smax_\apm(x), h^2}\hess \smax_\apm(x)[h,h] \nonumber\\
&\qquad\qquad - \inner{\grad \smax_\apm(x), h}\hess \smax_\apm(x)[h^2,h]\bigr|\nonumber\\
&\leq \frac{1}{\apm}\bigl(\abs{\hess \smax_\apm(x)[h^3,h]} + \abs{\inner{\grad \smax_\apm(x), h^2}\hess \smax_\apm(x)[h,h]}\nonumber\\
&\qquad\qquad + \abs{\inner{\grad \smax_\apm(x), h}\hess \smax_\apm(x)[h^2,h]}\bigr)\nonumber\\
&\leq \frac{1}{\apm}\pa{\frac{2}{\apm}\norm{h}_2^4 + \frac{1}{\apm}\norm{h}_2^4 + \frac{2}{\apm}\norm{h}_2^4}\nonumber\\
&= \frac{5}{\apm^2}\norm{h}_2^4,\label{eq:thirdh2hh}
\end{align}
where the last inequality follows from \eqref{eq:hessh3hineq}, \eqref{eq:hesshhineq}, and combining \eqref{eq:gradnormineq} with another application of H\"older's inequality.

At this point, we may again take the derivative of $\third \smax_\apm(x)[h,h,h]$ (eq.\eqref{eq:thirdsmaxhhh}) with respect to $x$ to arrive at
\begin{align*}
\fourth \smax_\apm(x)[h,h,h] = \frac{1}{\apm}\bigl(\third &\smax_\apm(x)[h^2, h] - 2\inner{\grad \smax_\apm(x), h}\third \smax_\apm(x)[h,h] \\
&- 2\hess \smax_\apm(x)[h, h]\hess \smax_\apm(x)[h]\bigr).
\end{align*}
Note that, for all $h \in \reals^m$,
\begin{align*}
|\fourth \smax_\apm(x)[h,h,&h,h]| \leq \frac{1}{\apm}\biggl(\abs{\third \smax_\apm(x)[h^2, h, h]} + 2\abs{\inner{\grad \smax_\apm(x), h}\third \smax_\apm(x)[h,h,h]}\\
&\qquad\qquad\quad\quad + \abs{2\pa{\hess \smax_\apm(x)[h, h]}^2}\biggr)\\
& \leq \frac{1}{\apm}\biggl(\abs{\third \smax_\apm(x)[h^2, h, h]} + 2\norm{\grad \smax_\apm(x)}_2\norm{h}_2\abs{\third \smax_\apm(x)[h,h,h]}\\
&\qquad\qquad + \abs{2\pa{\hess \smax_\apm(x)[h, h]}^2}\biggr)\\
&\leq \frac{1}{\apm}\pa{\frac{5}{\apm^2}\norm{h}_2^4 + \frac{8}{\apm^2}\norm{h}_2^4 + \frac{2}{\apm^2}\norm{h}_2^4}\\
& = \frac{15}{\apm^3}\norm{h}_2^4,
\end{align*}
where the second inequality holds by H\"older's inequality, and the third inequality follows from \eqref{eq:thirdh2hh}, \eqref{eq:thirdhhh}, and \eqref{eq:hesshhineq}.
Thus, we arrive at the key result concerning the fourth derivative of $\smax_\apm(x)$, namely that, for all $x$, $h \in \reals^d$,
\begin{equation}\label{eq:smaxfourthbound}
\abs{\fourth \smax_\apm(x)[h,h,h,h]} \leq \frac{15}{\apm^3}\norm{h}_2^4.\qedhere
\end{equation}
\end{proof}

\subsection{Proof of Lemma \ref{lem:lipschitz}}
\begin{proof}[Proof of Lemma \ref{lem:lipschitz}]
Note that, for all $\zeta \in \reals^d$,
\begin{align*}
\norm{\fourth f(\zeta)}_{\bA^\top \bA}^* &= \max_{h : \norm{h}_{\bA^\top \bA} \leq 1} \abs{\fourth f(\zeta)[h,h,h,h]}\\
 &= \max_{h : \norm{\bA h}_2 \leq 1} \abs{\fourth f(\zeta)[h,h,h,h]}\\
 &\leq \max_{h : \norm{\bA h}_2 \leq 1} L_3\norm{\bA h}_2^4\\
 & = L_3,
\end{align*}
where the inequality follows from \eqref{eq:fourthboundsup}. Thus, we may see by a standard mean value theorem argument that, for any $x, y \in \reals^d$,
\begin{equation*}
\norm{\third f(y) - \third f(x)}_{\bA^\top \bA}^* \leq L_3\norm{y-x}_{\bA^\top \bA}.\qedhere
\end{equation*}
\end{proof}

\subsection{Proof of Theorem \ref{thm:mainlipschitz}}
\begin{proof}[Proof of Theorem \ref{thm:mainlipschitz}]
We may first note that, by the chain rule,
\begin{equation}\label{eq:hessaftertransform}
\grad f(x) = \bA^\top \grad \smax_\apm(\bA x-b) \quad \text{and} \quad \hess f(x) = \bA^\top \hess \smax_\apm(\bA x-b) \bA.
\end{equation}
Let $h \in \reals^d$. Then, we have that $\hess f(x)[h,h] = \hess \smax_\apm(\bA x-b)[\bA h,\bA h]$. By additional applications of the chain rule, it follows that 
\begin{equation}
\third f(x)[h,h] = \bA^\top \third \smax_\apm(\bA x-b)[\bA h,\bA h],
\end{equation}
 which implies that $\third f(x)[h,h,h] = \third \smax_\apm(\bA x-b)[\bA h,\bA h,\bA h]$. By one more chain rule application, we may see that
\begin{equation}
\fourth f(x)[h,h,h] = \bA^\top \fourth \smax_\apm(\bA x-b)[\bA h, \bA h, \bA h],
\end{equation}
which gives us
\begin{equation}
\fourth f(x)[h,h,h,h] = \fourth \smax_\apm(\bA x-b)[\bA h, \bA h, \bA h, \bA h].
\end{equation}
Combining this expression with \eqref{eq:smaxfourthbound}, we have that for all $h \in \reals^d$,
\begin{equation*}
\abs{\fourth f(x)[h,h,h,h]} \leq \frac{15}{\apm^3}\norm{\bA h}_2^4,
\end{equation*}
and so it follows from \ref{eq:highersmooth} and Lemma \ref{eq:fourthboundsup} that $f(x)$ is (order 3) $\frac{15}{\apm^3}$-smooth.
\end{proof}

\subsection{Proof of Corollary \ref{cor:mainlinfty}}
\begin{proof}[Proof of Corollary \ref{cor:mainlinfty}]
Note that, by Theorem \ref{thm:mainlipschitz}, we know that $f(x)$ is (order 3) $\frac{15}{\apm^3}$-smooth w.r.t. $\norm{\cdot}_{\bA^\top\bA}$, and so the iteration complexity follows. Part of the computational overhead of $\fastq$ comes from computing, for vectors $x, h_t \in \reals^d$,
\begin{equation}
c_t \defeq \grad f(x) + \hess f(x)h_t + \frac{1}{2}\third f(x)[h_t, h_t] + \frac{15}{\apm^3}\norm{h_t}_{\bA^\top\bA}\bA^\top \bA h_t,
\end{equation}
which can be done in time proportional to evaluating $f(x)$ \citep{agarwal2017finding, nesterov2018implementable}. In addition, for $\lambda > 0$, part of the (omitted) approximation procedure for each iteration of the method requires computing
\begin{equation}
g(\lambda) \defeq c_t^\top(\sqrt{2}\lambda \bA^\top \bA + \hess f(x))^{-1}\bA^\top \bA(\sqrt{2}\lambda \bA^\top \bA + \hess f(x))^{-1}c_t.
\end{equation}
Since we know by \eqref{eq:smaxhess} that $\hess f(x) = \bA^\top \diag(p(x))\bA - \bA^\top p(x)p(x)^\top\bA$, it follows that, for any $v \in \reals^d$,
\begin{align*}
(\sqrt{2}\lambda \bA^\top \bA + \hess f(x))^{-1}v &= ( \bA^\top(\diag(p(x))+\sqrt{2}\lambda\bI) \bA - \bA^\top p(x)p(x)^\top\bA)^{-1}v\\
&= \bM_x^{-1}v + \xi_{x,v}\bM_x^{-1}\bA^\top p(x)\\
&= \bM_x^{-1}(v+\xi_{x,v}\bA^\top p(x))
\end{align*}
where we let 
\begin{equation}
\bM_x \defeq (\sqrt{2}\lambda \bA^\top(\diag(p(x))+\bI) \bA), \qquad \xi_{x,v} \defeq \frac{p(x)^\top\bA \bM_x v }{1+p(x)^\top\bA \bM_x \bA^\top p(x)},
\end{equation}
and where the second equality follows from the Sherman–Morrison formula. Thus, the  computational bottleneck for each iteration of $\fastq$ is in solving $O(\log^{O(1)}(\Zcal/\eps))$ linear systems of the form
\begin{equation}
\bA^\top \bD_x \bA\phi = w_{x, v}
\end{equation}
for diagonal matrix $\bD_x \defeq (\diag(p(x))+\sqrt{2}\lambda\bI)$, and $w_{x, v} \defeq (v+\xi_{x,v}\bA^\top p(x))$.
\end{proof}

\subsection{Proof of Lemmas \ref{lem:l1approx} and \ref{lem:hingeapprox}}
\begin{proof}[Proof of Lemma \ref{lem:l1approx}]
Since $\abs{\scalar} = \max\braces{\scalar,-\scalar}$, it follows by Fact \eqref{fact:smaxapprox} that $\abs{\scalar} \leq \sabs_\apm(\scalar) \leq \abs{\scalar}+\apm$. Thus, we have that
\begin{equation}
\norm{x}_1 = \sum\limits_{i=1}^m \abs{x_i} \leq \sum\limits_{i=1}^m \sabs_\apm(x_i) \leq \norm{x}_1 + \apm m.
\end{equation}
\end{proof}

\begin{proof}[Proof of Lemma \ref{lem:hingeapprox}]
The proof follows immediately from Fact \eqref{fact:smaxapprox}.
\end{proof}

\subsection{Proof of Theorem \ref{thm:mainlipschitzsvm}}
\begin{proof}[Proof of Theorem \ref{thm:mainlipschitzsvm}]
First, we observe that since $\softlone_\apm(x) = \sum\limits_{i=1}^d \sabs_\apm(x_i)$, it follows that
\begin{equation}
    \abs{\fourth \softlone_\apm(x)[h,h,h,h]} \leq \sum\limits_{i=1}^d \abs{\fourth \sabs_\apm(x_i)[h,h,h,h]} \leq \frac{15d}{\apm^3}\norm{h}_2^4.
\end{equation}
In addition, we may note that
\begin{equation*}
    \abs{\fourth \ssvm_\apm(\tQ x)[\tQ h,\tQ h,\tQ h,\tQ h]} \leq \frac{1}{m}\sum\limits_{i=1}^m \frac{15}{\apm^3}\norm{\tQ h}_2^4 \leq \frac{15\norm{\tQ^\top \tQ}^2}{\apm^3}\norm{h}_2^4.
\end{equation*}

Taken together, we may see that
\begin{align*}
    \abs{\fourth f_\apm(x)[h,h,h,h]} &= \abs{\lambda\fourth \softlone_\apm(x)[h,h,h,h] + \fourth \ssvm_\apm(\tQ x)[\tQ h, \tQ h, \tQ h, \tQ h]}\\
    &\leq \lambda\abs{\fourth \softlone_\apm(x)[h,h,h,h]} + \abs{\fourth \ssvm_\apm(\tQ x)[\tQ h, \tQ h, \tQ h, \tQ h]}\\
    &\leq \frac{15\pa{\lambda d + \norm{\tQ^\top \tQ}^2}}{\apm^3}\norm{h}_2^4,
\end{align*}
and so the theorem follows from Lemma \ref{lem:lipschitz}.
\end{proof}

\subsection{Proof of Corollary \ref{cor:mainl4svm}}
\begin{proof}[Proof of Corollary \ref{cor:mainl4svm}]
Let $g(x) = \lambda\norm{x}_4^4$. First, we may note that $g(x)$ is (order 4) $O(\frac{1}{d})$-uniformly convex w.r.t. $\norm{\cdot}_2$ (see, e.g., Lemma 2.4 in \cite{bullins2018fast}). The proof follows by combining Theorem \ref{thm:kappa} with the proof of Theorem \ref{thm:mainlipschitzsvm}, in addition to the observation that, for all $x, h \in \reals^d$, \begin{equation*}
\abs{\fourth g(x)[h,h,h,h]} \leq 24\lambda\norm{h}_2^4.\qedhere
\end{equation*}
\end{proof}

\end{document}